\newtheorem{theorem}{Theorem}[section]
\newtheorem{lemma}[theorem]{Lemma}
\newtheorem{prop}[theorem]{Proposition}
\theoremstyle{definition}
\newtheorem{theoman}{Theorem}[section]
\newtheorem{probman}[theoman]{Problem}
\theoremstyle{remark}
\numberwithin{equation}{section}
\begin{document}

% \title[short text for running head]{full title}
\title{Packing minima of convex bodies}

%    Only \author and \address are required; other information is
%    optional.  Remove any unused author tags.

%    author one information
% \author[short version for running head]{name for top of paper}
\author{Mei Han}
\address{Institut f{\"u}r Mathematik, Technische Universit{\"a}t Berlin, Berlin, Germany}
\curraddr{}
\email{ludy\_han@163.com}
\thanks{The research of the first-named author is supported by the Sino-German (CSC-DAAD) Postdoc Scholarship Program (57718047).}

%    author two information
\author{Martin Henk}
\address{Institut f{\"u}r Mathematik, Technische Universit{\"a}t Berlin, Berlin, Germany}
\curraddr{}
\email{henk@math.tu-berlin.de}
\thanks{}

%    author two information
\author{Fei Xue}
\address{Institute of Mathematics, School of Mathematical Sciences, Nanjing Normal University, Nanjing, China}
\curraddr{}
\email{05429@njnu.edu.cn}
\thanks{The research of the third-named author is supported by the National Natural Science Foundation of China (NSFC 12201307).}

%    \subjclass is required.
\subjclass[2020]{primary 52C07; secondary 11H06, 52C05.}
\keywords{
  lattices,
  convex bodies,
  packing minima,
  successive minima,
  volume inequalities.
}

\date{\today}

\dedicatory{}

%    Abstract is required.
\begin{abstract} In 2021, Henk, Schymura and Xue introduced packing minima, associated with a convex body and a lattice, as packing counterparts to the covering minima of Kannan and Lov{\'a}sz. Motivated by conjectures on the volume inequalities for the successive minima, we generalized the definition of the packing minima to the class of all convex bodies that contain the origin in their interior. For these packing minima, we presented several novel volume inequalities and calculated the specific values of the packing minima for several special convex bodies. 
\end{abstract}

\maketitle

\section{\bf Introduction}\label{section:introduction}
Let $\mathcal{K}^n$ be the set of all convex bodies, i.e., compact convex sets with non-empty interior in the $n$-dimensional Euclidean space $\mathbb{R}^n$,  and let ${\mathcal K}^n_{(o)}$ be the subset of all convex bodies containing the origin in their interior, i.e., ${\bf 0}\in\mathrm{int}(K)$. The family of all $o$-symmetric convex bodies, i.e., $K=-K$, is denoted by $\mathcal{K}_s^n$. If the centroid of a convex body $K$, 
$$
\mathrm{cen}(K)=\frac{1}{\mathrm{vol}(K)}\int_K{\bf x}~\mathrm{d}^n{\bf x}
$$
is the origin, the $K$ is called centered and all centered convex bodies are denoted by $\mathcal{K}_c^n$; here  $\mathrm{vol}(K)$ denotes the volume, i.e., the $n$-dimensional Lebesgue measure of $K$. 

A lattice $\Lambda$ is a discrete subgroup of $\mathbb{R}^n$ 
and by $\det(\Lambda)$ we denote its determinant, i.e., if the dimension of $\Lambda$ is $k$, then $\det(\Lambda)$ is the ($k$-dimensional) volume of a fundamental cell of $\Lambda$.  
 Let $\mathbb{Z}^n\subset \mathbb{R}^n$ be the integer lattice,  and let $\mathcal{L}^n$ be the set of all $n$-dimensional lattices in $\mathbb{R}^n$. 
 
 %$$\mathcal{L}^n=\{\Lambda:\Lambda=B\mathbb{Z}^n, B\in \mathrm{GL}_n(\mathbb{R})\},$$ 
 %be the set of all $n$-dimensional lattices. Here 
% 
% w$GL_n(\mathbb{R})$ is the set of all invertible matrices of order $n$, and $\det(\Lambda) =|\det %B|$ is defined as the determinant of $\Lambda$. 

For $K\in{\mathcal K}^n_{(o)}$ and $\Lambda\in\mathcal{L}^n$, the {\it $i$-th successive minimum} $\lambda_i(K,\Lambda)$ is the smallest dilation factor $\lambda>0$ such that $\lambda K$ contains at least $i$ linearly independent lattice points of $\Lambda$, i.e.,
\begin{align*}
    \lambda_i(K,\Lambda):=\min\{\lambda>0:\dim(\lambda K\cap\Lambda)=i\}.
\end{align*}
Apparently, $\lambda_1(K,\Lambda)\le\lambda_2(K,\Lambda)\le\cdots\le\lambda_n(K,\Lambda)$. The concept of successive minima was firstly introduced for $o$-symmetric convex bodies by Minkowski in his fundamental and guiding book "Geometrie der Zahlen" in 1896 \cite{minkowski1896}.
In order to make this concept of successive minima  available for arbitrary convex bodies we also need the difference body $K_s$ of a convex body $K\in\mathcal{K}^n$ defined by $K_s:=K-K$. By definition, $K_s$ is an $o$-symmetric convex body.

%where he called them "kleinstes System von unabh\"angig gerichteten Strahlendistanzen im Zahlengitter".
Minkowski's so called {\it second theorem on successive minima} establishes a beautiful 
upper and lower bound on the volume of an $o$-symmetric convex body in terms of its successive minima (see, e.g., \cite{minkowski1896}) which can easily be stated in a more general form for arbitrary convex bodies (see).

\begin{theoman}[Minkowski's second theorem on successive minima]
    Let $K\in\mathcal{K}^n$ and $\Lambda\in\mathcal{L}^n$. Then
    \begin{align*}
     \frac{1}{n!}\overset{n}{\underset{i=1}{\prod}}\frac{1}{\lambda_i(K_s,\Lambda)}\le\frac{\mathrm{vol}(K)}{\det(\Lambda)}\le\overset{n}{\underset{i=1}{\prod}}\frac{1}{\lambda_i(K_s,\Lambda)}.  
    \end{align*}
\label{thm:minkowski}    
\end{theoman}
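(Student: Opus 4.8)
The plan is to treat the two inequalities separately: the upper bound reduces quickly to the classical $o$-symmetric version of the theorem (Minkowski's original result for $K_s$), while the lower bound does \emph{not} follow from the symmetric case and instead calls for a direct induction on the dimension $n$.

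For the upper bound I would apply the classical second theorem to the $o$-symmetric body $K_s$, which yields $\prod_{i=1}^{n}\lambda_i(K_s,\Lambda)\,\mathrm{vol}(K_s)\le 2^{n}\det(\Lambda)$, and then compare $\mathrm{vol}(K_s)$ with $\mathrm{vol}(K)$. By the Brunn--Minkowski inequality, $\mathrm{vol}(K-K)^{1/n}\ge\mathrm{vol}(K)^{1/n}+\mathrm{vol}(-K)^{1/n}=2\,\mathrm{vol}(K)^{1/n}$, so $\mathrm{vol}(K_s)\ge 2^{n}\mathrm{vol}(K)$. Substituting this cancels the factor $2^{n}$ and gives $\prod_{i=1}^{n}\lambda_i(K_s,\Lambda)\,\mathrm{vol}(K)\le\det(\Lambda)$, the asserted upper bound. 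Note that here the relevant volume inequality points the \emph{wrong} way for the lower bound (it bounds $\mathrm{vol}(K_s)$ from below, whereas a lower bound on $\mathrm{vol}(K)$ would be needed), which is why the lower bound must be argued directly.

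For the lower bound I would induct on $n$, the case $n=1$ being immediate. Choose linearly independent $a_1,\dots,a_n\in\Lambda$ with $a_i\in\lambda_i(K_s,\Lambda)\,K_s$, where $a_1$ may be taken primitive, and write $a_1=\lambda_1(x_1-y_1)$ with $x_1,y_1\in K$; then $[y_1,x_1]\subseteq K$ is a chord parallel to $a_1$ of Euclidean length $|a_1|/\lambda_1$. Let $\pi$ be the orthogonal projection onto $a_1^{\perp}$. Projection is compatible with every ingredient: $\pi(K)_s=\pi(K_s)$, the images $\pi(a_2),\dots,\pi(a_n)$ are independent lattice vectors of $\pi(\Lambda)$ giving the interlacing $\lambda_i(\pi(K)_s,\pi(\Lambda))\le\lambda_{i+1}(K_s,\Lambda)$ for $1\le i\le n-1$, and, by primitivity of $a_1$, the covolumes satisfy $\det(\pi(\Lambda))=\det(\Lambda)/|a_1|$. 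Writing $\mathrm{vol}(K)$ by Fubini as the integral over $\pi(K)$ of the fiber-length function $\ell$, which is concave and satisfies $\ell(\pi(x_1))\ge|a_1|/\lambda_1$, the convex hypograph of $\ell$ contains the cone over $\pi(K)$ with apex at height $|a_1|/\lambda_1$, whence $\mathrm{vol}(K)\ge\frac1n\,\frac{|a_1|}{\lambda_1}\,\mathrm{vol}_{n-1}(\pi(K))$. Feeding in the inductive bound for $\pi(K)$ together with the interlacing and the covolume relation makes $|a_1|$ cancel and produces exactly the constant $\frac1{n!}=\frac1n\cdot\frac1{(n-1)!}$ and the factor $\prod_{i=1}^{n}1/\lambda_i(K_s,\Lambda)$.

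I expect the main obstacle to be the inductive step of the lower bound rather than the upper bound, which is only a short deduction from known results. Two points need care: proving the cone inequality $\mathrm{vol}(K)\ge\frac1n\,(\text{fiber length})\,\mathrm{vol}_{n-1}(\pi(K))$ rigorously, which rests on the concavity of the fiber-length function and on placing the cone over $\pi(K)$ inside its hypograph; and tracking the covolume normalization so that the per-step factors $1/|a_1|$ telescope correctly against the realized fiber lengths $|a_1|/\lambda_1$. The case $n=1$, where the single chord of length $1/\lambda_1(K_s,\Lambda)$ already attains both bounds with equality, also exhibits the extremal bodies (simplices) and shows that the constant $\frac1{n!}$ in the lower bound is best possible.
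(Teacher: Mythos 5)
Your proposal is correct, but there is nothing in the paper to compare it against: the paper states this theorem as a classical result, cites Minkowski's \emph{Geometrie der Zahlen}, and offers no proof beyond the remark that the lower bound follows from an easy inclusion argument while the upper bound is deep. Your argument fills this in legitimately. For the upper bound, the reduction is exactly the ``easy extension'' from the symmetric case that the paper alludes to: applying the symmetric second theorem to $K_s$ and cancelling the factor $2^n$ against $\mathrm{vol}(K_s)\ge 2^n\mathrm{vol}(K)$ (Brunn--Minkowski) is valid, with the understanding that the deep symmetric upper bound is used as a black box --- which is also the paper's stance. Your side remark is also accurate: the lower bound cannot be obtained this way, since Rogers--Shephard gives only $\mathrm{vol}(K)\ge\binom{2n}{n}^{-1}\mathrm{vol}(K_s)$, which yields a constant strictly worse than $\tfrac1{n!}$. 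Your inductive proof of the lower bound checks out in every step: $a_1$ attaining $\lambda_1$ may indeed be taken primitive; $\pi(K)_s=\pi(K_s)$ since linear maps commute with Minkowski differences; the interlacing $\lambda_i(\pi(K)_s,\pi(\Lambda))\le\lambda_{i+1}(K_s,\Lambda)$ follows from projecting $a_2,\dots,a_{i+1}$; primitivity gives $\det(\pi(\Lambda))=\det(\Lambda)/|a_1|$; and the cone estimate $\mathrm{vol}(K)\ge\tfrac1n\,\tfrac{|a_1|}{\lambda_1}\,\mathrm{vol}_{n-1}(\pi(K))$ is justified because the hypograph of the concave fiber-length function is convex, contains $\pi(K)\times\{0\}$ and a point at height $|a_1|/\lambda_1$, hence contains the cone, whose volume is $\tfrac1n(\text{height})\,\mathrm{vol}_{n-1}(\text{base})$. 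The factors of $|a_1|$ then cancel exactly as you say, producing $\tfrac1{n!}\prod_{i=1}^n\lambda_i(K_s,\Lambda)^{-1}$. The only loose end is your closing claim that the case $n=1$ ``exhibits the extremal bodies'': sharpness of the constant $\tfrac1{n!}$ in dimension $n$ requires an $n$-dimensional example, e.g.\ $K=\mathrm{conv}\{{\bf 0},{\bf e}_1,\dots,{\bf e}_n\}$ with $\Lambda=\mathbb{Z}^n$, for which $\lambda_i(K_s,\mathbb{Z}^n)=1$ and $\mathrm{vol}(K)=\tfrac1{n!}$; but sharpness is not part of the statement you were asked to prove, so this does not affect the proof's validity.
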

Boths bounds are best possible. The lower bound follows by an easy inclusion argument, the upper bound, however, is considered as a deep and important result in Geometry of Numbers. 

Hence, possible generalizations and extensions have been studied  over the last hundered years. One very prominent one is due to Davenport from 1946. In order to present it we need  
%Fifty years later, Davenport posed the following volume inequality with respect to the successive %minima and the lattice density. For $K\in\mathcal{K}^n$, let $K_c:=K-K$ be the difference body of %$K$, apparently, $K_c\in\mathcal{K}_s^n$, then 
\begin{align*}
    \delta^l(K):=\sup\left\{\frac{\mathrm{vol}(\lambda_1(K_s,\Lambda)K)}{\det(\Lambda)}:\Lambda\in\mathcal{L}^n\right\},
\end{align*}
the so-called {\it the lattice packing density} of $K$, i.e., the density of a densest lattice packing of $K$. Note that $\delta^l(K)\leq 1$. The first successive minimum $\lambda_1(K_s,\Lambda)$ is also  called {\it the packing radius} for $K$ and $\Lambda$, as it is the largest dilation factor $\rho>0$ such that 
$$\mathrm{int}(\rho K+{\bf x})\cap\mathrm{int}(\rho K+{\bf y})=\emptyset$$
for any two different lattice points {\bf x} and {\bf y} in $\Lambda$. Here, $\mathrm{int}(\cdot)$ denotes the interior of a set. By definition of the packing density it is
\begin{equation}
    \frac{\mathrm{vol}(K)}{\det(\Lambda)}\le\delta^l(K)\cdot\left(\frac{1}{\lambda_1(K_s,\Lambda)}\right)^n,
\label{eq:davenport1}    
\end{equation}
and  Davenport asked for the following strengthening:  
\begin{probman}[Davenport \cite{davenport1946}]\label{davenport's problem}
    Let $K\in\mathcal{K}^n$. Is it true that
    \begin{align}\label{ineq:Davenport}
       \frac{\mathrm{vol}(K)}{\det(\Lambda)}\le\delta^l(K)\cdot\overset{n}{\underset{i=1}{\prod}}\frac{1}{\lambda_i(K_s,\Lambda)}~? 
    \end{align}
\end{probman}
Obviously, this would be a significant sharpening of Minkowski's upper bound in Theorem \ref{thm:minkowski}.  
So far this question has only been affirmatively answered for $n=2$ and for ellipsoids by Minkowski (see \cite{minkowski1896} and \cite{gruber2007}), the case $n=3$ has been settled by Woods \cite{woods1956}. For more information, we refer to \cite{gruber2007}.

Another direction of extensions is to evaluate the successive minima directly with respect to the convex body, instead with respect to the difference body which a priori only yields optimal results for $o$-symmetric convex bodies.  In \cite{hhh2016}, Henk et al. studied centered convex bodies in this respect and proved among others an optimal lower bound.

\begin{theoman}[\cite{hhh2016}]\label{lvsm2016}
    Let $K\in\mathcal{K}_c^n$ and $\Lambda\in\mathcal{L}^n$. Then,
    \begin{align*}
         \frac{n+1}{n!}\overset{n}{\underset{i=1}{\prod}}\frac{1}{\lambda_i(K,\Lambda)}\le\frac{\mathrm{vol}(K)}{\det(\Lambda)}
    \end{align*}
 with equality  if and only if there exits a basis ${\bf b}_1,...,{\bf b}_n$ of $\Lambda$ and  positive numbers $\nu_1,...,\nu_n>0$ such that
 \begin{align*}
     K=\mathrm{conv}\{\nu_1{\bf b}_1,...,\nu_n{\bf b}_n,-(\nu_1{\bf b}_1+\cdots+\nu_n{\bf b}_n)\}.
 \end{align*}
 %where $\{{\bf b}_1,...,{\bf b}_n\}$ is a basis of $\Lambda$ and $\mathrm{conv}(\cdot)$ denotes the convex hull of a set. 
\end{theoman}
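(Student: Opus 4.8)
The plan is to strip away the lattice, reduce everything to a single volume estimate for centered bodies, and then attack that estimate by a Brunn-type slicing argument.

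\emph{Reduction to a lattice-free core inequality.} By definition of the successive minima I can choose linearly independent ${\bf z}_1,\dots,{\bf z}_n\in\Lambda$ with ${\bf z}_i\in\lambda_i(K,\Lambda)K$, and set ${\bf a}_i:=\lambda_i(K,\Lambda)^{-1}{\bf z}_i\in K$. Since the ${\bf z}_i$ generate a sublattice of $\Lambda$ we have $|\det({\bf z}_1,\dots,{\bf z}_n)|\ge\det(\Lambda)$, whence
\[
|\det({\bf a}_1,\dots,{\bf a}_n)|=\frac{|\det({\bf z}_1,\dots,{\bf z}_n)|}{\prod_{i=1}^n\lambda_i(K,\Lambda)}\ge\frac{\det(\Lambda)}{\prod_{i=1}^n\lambda_i(K,\Lambda)}.
\]
Thus it suffices to prove: for $K\in\mathcal{K}_c^n$ and linearly independent ${\bf a}_1,\dots,{\bf a}_n\in K$ one has $\mathrm{vol}(K)\ge\frac{n+1}{n!}\,|\det({\bf a}_1,\dots,{\bf a}_n)|$, with equality if and only if $K=\mathrm{conv}\{{\bf a}_1,\dots,{\bf a}_n,-({\bf a}_1+\cdots+{\bf a}_n)\}$. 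The theorem then follows; the extremal bodies arise precisely when in addition $|\det({\bf z}_i)|=\det(\Lambda)$, i.e.\ the ${\bf z}_i$ form a basis of $\Lambda$, in which case ${\bf b}_i={\bf z}_i$ and $\nu_i=1/\lambda_i(K,\Lambda)$ give the stated shape.

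\emph{Normalization.} Both sides of the core inequality transform the same way under a map $T\in\mathrm{GL}_n(\mathbb{R})$, so the ratio $\mathrm{vol}(TK)/|\det(T{\bf a}_i)|$ is invariant, and $\mathrm{cen}(TK)=T\,\mathrm{cen}(K)={\bf 0}$ keeps $TK$ centered. Choosing $T$ with $T{\bf a}_i={\bf e}_i$, I may assume ${\bf a}_i={\bf e}_i$, so that $K$ is a centered body containing the simplex $S:=\mathrm{conv}\{{\bf 0},{\bf e}_1,\dots,{\bf e}_n\}$ and the goal becomes $\mathrm{vol}(K)\ge(n+1)/n!$. The conjectured extremizer $K^\ast:=\mathrm{conv}\{{\bf e}_1,\dots,{\bf e}_n,-{\bf 1}\}$, where $-{\bf 1}:=-({\bf e}_1+\cdots+{\bf e}_n)$, is indeed centered since its vertices sum to ${\bf 0}$, and a direct computation gives $\mathrm{vol}(K^\ast)=(n+1)/n!$.

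\emph{Slicing.} Put $\phi({\bf x})=\sum_i x_i$ and let $A(t)=\frac{d}{dt}\mathrm{vol}(K\cap\{\phi\le t\})$, so that $\mathrm{vol}(K)=\int_{\mathbb{R}}A(t)\,dt$ and the centroid condition $\mathrm{cen}(K)={\bf 0}$ reads $\int_{\mathbb{R}}t\,A(t)\,dt=0$. By Brunn's concavity principle the function $A^{1/(n-1)}$ is concave on its support, while the inclusion $S\subseteq K$ forces $A(t)\ge t^{n-1}/(n-1)!$ for $t\in[0,1]$. One checks that $K^\ast$ has profile $A^\ast(t)=c\,(t+n)^{n-1}$ on $[-n,1]$, which meets the boundary constraint with equality exactly at $t=1$ and has vanishing first moment. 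The core inequality is thereby reduced to a one-dimensional problem: minimize $\int A$ among all $A$ with $A^{1/(n-1)}$ concave, $A(t)\ge t^{n-1}/(n-1)!$ on $[0,1]$, and $\int t\,A=0$. Restricting already to affine roots (cone profiles) $A(t)=\big(\kappa(t-m)\big)^{n-1}$ exhibits the mechanism: the condition $\int t\,A=0$ pins the apex at $m=-n$ and returns exactly $\int A=(n+1)/n!$.

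\emph{The main obstacle.} The crux is to show that \emph{no} concave-rooted profile beats the cone, i.e.\ that the affine root is the genuine minimizer, and then to read off the equality case. This is where the concavity of $A^{1/(n-1)}$ must be exploited quantitatively rather than merely through the sign pattern of $A-A^\ast$. Writing $\mathrm{vol}(K)-\mathrm{vol}(K^\ast)=\int(1-\mu t)(A-A^\ast)\,dt$, valid for every $\mu$ because the first moments of $A$ and $A^\ast$ agree, and combining the convexity of $s\mapsto s^{n-1}$ with the concavity constraint, one is left to balance a weight that is polynomial rather than affine in $t$ (for $n\ge 4$) against the single available moment condition; a rearrangement or extreme-point argument for the concavity-constrained program seems unavoidable here. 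I expect this optimization, together with the degeneracy analysis forcing $K=K^\ast$ at equality, to be the technical heart of the proof, whereas the reduction and the slicing set-up above are routine.
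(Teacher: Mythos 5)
Your lattice-theoretic reduction is correct: choosing linearly independent ${\bf z}_i\in\lambda_i(K,\Lambda)K\cap\Lambda$, using that they span a full-rank sublattice so that $|\det({\bf z}_1,\dots,{\bf z}_n)|\ge\det(\Lambda)$, and rescaling to ${\bf a}_i=\lambda_i(K,\Lambda)^{-1}{\bf z}_i\in K$ reduces the theorem to the lattice-free statement: for $K\in\mathcal{K}_c^n$ and linearly independent ${\bf a}_1,\dots,{\bf a}_n\in K$ one has $\mathrm{vol}(K)\ge\frac{n+1}{n!}\,|\det({\bf a}_1,\dots,{\bf a}_n)|$, with equality exactly for $K=\mathrm{conv}\{{\bf a}_1,\dots,{\bf a}_n,-({\bf a}_1+\cdots+{\bf a}_n)\}$. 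But this core statement is precisely Lemma \ref{hhh2016}, i.e.\ \cite[Lemma 2.2]{hhh2016}, which is the entire substance of the theorem; the lattice bookkeeping around it is the routine part (it is also how the present paper deduces Theorem \ref{thm:volume1} from that lemma). So what you have done is rediscover the correct reduction, not prove the result.

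The gap is that you then leave the core inequality unproven. Your slicing framework is set up correctly (Brunn concavity of $A^{1/(n-1)}$, the bound $A(t)\ge t^{n-1}/(n-1)!$ on $[0,1]$ from $S\subseteq K$, the vanishing first moment from $\mathrm{cen}(K)={\bf 0}$, and the cone computation giving $(n+1)/n!$), but the decisive step --- showing no admissible profile beats the cone, and identifying the equality case --- is explicitly deferred, in your own words, as ``the technical heart of the proof.'' Beyond being incomplete, this route has two concrete hazards you would have to resolve. First, the one-dimensional program is a relaxation: it retains from $S\subseteq K$ only the section-volume bound, so its minimum could a priori lie below $(n+1)/n!$; moreover, for $n\ge3$ the natural variable is the concave root $f=A^{1/(n-1)}$, and then both the objective $\int f^{n-1}$ and the constraint $\int t\,f^{n-1}=0$ are nonlinear in $f$, so linear-programming or simple extreme-point arguments do not apply verbatim. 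Second, even granting the profile inequality with equality only for the cone, equality of profiles gives $A=A^\ast$, not $K=K^\ast$; recovering the body (and hence the stated lattice-simplex characterization) requires the equality analysis in Brunn's principle (homothetic sections), which you do not address. A smaller omission: your equality discussion only covers the ``only if'' direction; the ``if'' direction still needs the verification that the extremal simplex has $\lambda_j(K,\Lambda)=1/\nu_{i_j}$, which does follow from the inequality itself together with $\lambda_j\le 1/\nu_{i_j}$, but should be said. As it stands, your argument proves the theorem only modulo exactly the lemma that the paper quotes from \cite{hhh2016}.
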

Here, $\mathrm{conv}(\cdot)$ denotes the convex hull of a set. A corresponding upper bound is an open problem and it is related to the so-called Ehrhart conjecture; see \cite{ahenk2023} for information on it.
%The research on successive minima are quite rich, and one can refer to \cite{ahenk2023}?. With the %ongoing progress in the geometry of numbers, several new families of functionals on convex bodies %and lattices have been introduced, leading to many interesting and profound results in this field. 

In 2021, Henk et al. \cite{hsx2021} introduced packing minima  as a counterpart to  the covering minima of Kannan and Lov{\'a}sz \cite{kl1988} and in order to interpolate between the successive minima of $K$ and the ones of the  polar body. We slightly generalize their definition to the class of all convex bodies containing the origin in their interior.  
For  $K\in\mathcal{K}^n_{(o)}$, $\Lambda\in\mathcal{L}^n$ and for $1\leq i\leq n$,  we define the {\it $i$-th packing minimum} $\rho_i(K,\Lambda)$  as
    $$\rho_i(K,\Lambda)=\inf\{\rho>0:(\rho K+L)\cap\Lambda\neq L\cap\Lambda,\text{ for all }L\in\mathcal{G}(\mathbb{R}^n,n-i)\}.$$
    Here $\mathcal{G}(\mathbb{R}^n,k)$ denotes the Grassmannian of all $k$-dimensional linear subspaces of $\mathbb{R}^n$. 
Our first result is an inequality in the spirit of Theorem \ref{lvsm2016} for these packing minima.
\begin{theorem}\label{thm:volume1}
Let $K\in\mathcal{K}_c^n$ and $\Lambda\in\mathcal{L}^n$. Then 
\begin{align}\label{vinequality1a}
    \frac{n+1}{n!}\overset{n}{\underset{i=1}{\prod}}\frac1{\rho_i(K,\Lambda)}\leq \frac{\mathrm{vol}(K)}{\det(\Lambda)},
\end{align}
and the inequality is best possible.
\end{theorem}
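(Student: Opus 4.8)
The plan is to reduce the statement to a single, carefully chosen complete flag of rational subspaces, to which one applies the centered simplex volume estimate that already underlies Theorem~\ref{lvsm2016}.

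First I would isolate the only property of the packing minima the argument needs: for every rational subspace $L\in\mathcal{G}(\mathbb{R}^n,n-i)$ one has
\[
\rho_i(K,\Lambda)\ \ge\ \lambda_1(\pi_L K,\pi_L\Lambda),
\]
where $\pi_L\colon\mathbb{R}^n\to\mathbb{R}^n/L$ is the quotient projection and $\pi_L\Lambda$ is the (full-rank) projected lattice. This is immediate: a lattice point $w\in\Lambda\setminus L$ lies in $\rho K+L$ exactly when $\pi_L(w)\in\rho\,\pi_L(K)$ is a nonzero point of $\pi_L\Lambda$, so $(\rho K+L)\cap\Lambda=L\cap\Lambda$ precisely when $\rho<\lambda_1(\pi_L K,\pi_L\Lambda)$, and the defining infimum of $\rho_i$ is therefore at least this value.

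Next I build the flag greedily so that its projected first minima are controlled simultaneously. Set $S_0=\{0\}$; having constructed a rational $S_{j-1}$ of dimension $j-1$, put $\alpha_{j-1}:=\lambda_1(\pi_{S_{j-1}}K,\pi_{S_{j-1}}\Lambda)$ and pick a shortest vector $g_j\in\pi_{S_{j-1}}(\Lambda)$ realizing it, i.e.\ $g_j\in\alpha_{j-1}\,\pi_{S_{j-1}}(K)$; lift $g_j$ to $u_j\in\Lambda$ and set $S_j:=S_{j-1}+\mathbb{R}u_j$. Since $g_j\in\alpha_{j-1}\pi_{S_{j-1}}(K)$ there is a point $w_j\in\alpha_{j-1}K$ with $\pi_{S_{j-1}}(w_j)=g_j$, whence $w_j\in S_j\setminus S_{j-1}$; thus $w_1,\dots,w_n$ are linearly independent and the scaled points $\tilde w_j:=\alpha_{j-1}^{-1}w_j$ lie in $K$. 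A routine flag computation gives $|\det(w_1,\dots,w_n)|=\prod_{j=1}^n\mathrm{dist}(w_j,S_{j-1})=\prod_{j=1}^n\|g_j\|$, while telescoping the determinants along $\Lambda\cap S_0\subset\cdots\subset\Lambda\cap S_n=\Lambda$ yields $\det\Lambda=\prod_{j=1}^n\|g_j\|$ (each $g_j$, being shortest, is primitive and generates the rank-one quotient). Hence $|\det(\tilde w_1,\dots,\tilde w_n)|=\det\Lambda\big/\prod_{j=0}^{n-1}\alpha_j$.

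Finally I would invoke the centered simplex volume estimate behind Theorem~\ref{lvsm2016}: for a centered body and linearly independent $\tilde w_1,\dots,\tilde w_n\in K$ one has $\mathrm{vol}(K)\ge\frac{n+1}{n!}\,|\det(\tilde w_1,\dots,\tilde w_n)|$, with equality for $\mathrm{conv}\{\tilde w_1,\dots,\tilde w_n,-\sum_j\tilde w_j\}$. This gives $\mathrm{vol}(K)/\det\Lambda\ge\frac{n+1}{n!}\prod_{j=0}^{n-1}\alpha_j^{-1}$. Taking $i=n-j$, the subspace $S_j$ has dimension $j=n-i$, so the first step yields $\alpha_j\le\rho_{n-j}(K,\Lambda)$ and therefore $\prod_{j=0}^{n-1}\alpha_j^{-1}\ge\prod_{i=1}^n\rho_i(K,\Lambda)^{-1}$, which is \eqref{vinequality1a}; running the extremal case of the simplex estimate through the greedy flag reproduces $\mathrm{conv}\{\nu_1 b_1,\dots,\nu_n b_n,-\sum_i\nu_i b_i\}$, so the bound is best possible.

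The conceptual obstacle to watch is the direction of comparison: one verifies $\rho_i\le\lambda_{n+1-i}(K,\Lambda)$, so the packing inequality is a priori stronger than Theorem~\ref{lvsm2016} and cannot be obtained from it by substitution. What rescues the argument is that the greedy flag produces projected minima with $\alpha_j\le\rho_{n-j}$, so the stronger flag bound $\mathrm{vol}/\det\ge\frac{n+1}{n!}\prod_j\alpha_j^{-1}$ upgrades automatically; the genuine analytic content remaining is the centered simplex estimate, namely that a maximal simplex with one vertex at the centroid occupies at most a $1/(n+1)$ fraction of the volume.
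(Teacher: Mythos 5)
Your proposal is correct and is essentially the paper's own argument: your greedy flag is exactly the inductive construction in the paper's Lemma \ref{lem:vectors} (take a shortest lattice vector with respect to the gauge of the body, project orthogonally, recurse), and the decisive volume input is the same centered-simplex estimate, Lemma \ref{hhh2016} from \cite{hhh2016}, followed by the same determinant telescoping along the flag. The only differences are bookkeeping: you compare the projected first minima $\alpha_j$ with $\rho_{n-j}(K,\Lambda)$ only at the very end, using just the easy direction of the definition (thereby avoiding the monotonicity $\rho_i(K|L^\perp,\Lambda|L^\perp)\le\rho_i(K,\Lambda)$ and the rescaling step that the paper's Lemma \ref{lem:vectors} needs, which rest on Proposition \ref{prop:projdef}), and primitivity of your greedy vectors gives the exact identity $\prod_j\Vert g_j\Vert=\det(\Lambda)$ where the paper only uses $|\det({\bf w}_1,\dots,{\bf w}_n)|\ge\det(\Lambda)$ for a full-rank sublattice.
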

As $\rho_i(K,\Lambda)\leq \lambda_{n-i+1}(K,\Lambda)$ (see Proposition \ref{prop:trans}) this inequality improves on Theorem \ref{lvsm2016}. Hence, if we have equality in  Theorem \ref{lvsm2016} then also in the Theorem \ref{thm:volume1}. We remark that for arbitrary convex bodies and with respect to the packing minima $\rho_i(K_s,\Lambda)$ of the difference body it was shown in \cite[Theorem 1.2]{hsx2021} that 
\begin{align}\label{vinequality1b}
    \frac{1}{n!}\overset{n}{\underset{i=1}{\prod}}\frac1{\rho_i(K_s,\Lambda)}\leq \frac{\mathrm{vol}(K)}{\det(\Lambda)}. 
\end{align}
Here we complete this lower bound with a "Davenport type"  upper bound. 

\begin{theorem}\label{thm:volume2}
Let $K\in\mathcal{K}^n$ and $\Lambda\in\mathcal{L}^n$. Then,
$$\frac{\mathrm{vol}(K)}{\det(\Lambda)}\le\delta^l(K)\cdot\overset{n}{\underset{i=1}{\prod}}\frac1{\rho_i(K_s,\Lambda)}.$$ 
\end{theorem}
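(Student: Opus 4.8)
The plan is to derive the bound directly from the definition of the lattice packing density by producing a single auxiliary lattice $M$ whose associated packing of $K$ is dense enough. For \emph{every} $M\in\mathcal{L}^n$ the definition of $\delta^l(K)$ gives
\[
\lambda_1(K_s,M)^n\,\mathrm{vol}(K)=\mathrm{vol}\!\left(\lambda_1(K_s,M)K\right)\le\delta^l(K)\det(M),
\]
so it suffices to exhibit an $M$ with $\lambda_1(K_s,M)^n/\det(M)\ge\prod_{i=1}^n\rho_i(K_s,\Lambda)/\det(\Lambda)$. Since every quantity involved is invariant under a simultaneous invertible linear map applied to $K$ and $\Lambda$, and since $\lambda_1(K_s,D^{-1}\Lambda)=\lambda_1(DK_s,\Lambda)$, I would look for $M$ of the form $M=D^{-1}\Lambda$ with $D$ a positive definite map at my disposal; the task then reduces to arranging $\mathrm{int}(DK_s)\cap\Lambda=\{\mathbf 0\}$ (so that $\lambda_1(K_s,M)\ge 1$) while making $\det(D)$ as large as possible.

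To build $D$ I would use the projection description of the packing minima. Writing $\pi_H$ for the orthogonal projection onto a linear subspace $H$ and calling $H$ admissible if $\pi_H\Lambda$ is a lattice, the defining slab condition translates into
\[
\rho_i(K_s,\Lambda)=\max\left\{\lambda_1(\pi_H K_s,\pi_H\Lambda):\dim H=i,\ H\text{ admissible}\right\}.
\]
Fix any complete flag $H_1\subset H_2\subset\dots\subset H_n=\mathbb{R}^n$ of admissible subspaces with $\dim H_i=i$, choose an adapted orthonormal basis $u_1,\dots,u_n$ ($u_i\in H_i\cap H_{i-1}^{\perp}$), set $\sigma_i:=\lambda_1(\pi_{H_i}K_s,\pi_{H_i}\Lambda)$, and let $D$ be the self-adjoint map scaling $u_i$ by $\sigma_i$. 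I claim $\mathrm{int}(DK_s)\cap\Lambda=\{\mathbf 0\}$, which I would prove by induction on $n$: projecting out the top direction $u_n$ leaves the flag $H_1\subset\dots\subset H_{n-1}$ for the $(n-1)$-dimensional pair $(\pi_{H_{n-1}}K_s,\pi_{H_{n-1}}\Lambda)$ with the \emph{same} values $\sigma_1,\dots,\sigma_{n-1}$, so the inductive hypothesis forces any $v\in\mathrm{int}(DK_s)\cap\Lambda$ to satisfy $\pi_{H_{n-1}}v=\mathbf 0$; the remaining one-dimensional case on $\mathbb{R}u_n$ is settled by $\sigma_n=\lambda_1(K_s,\Lambda)=\min_{w\in\Lambda\setminus\{\mathbf 0\}}\|w\|_{K_s}$, which shows that the shortest lattice vector on $\mathbb{R}u_n$ cannot lie in $\mathrm{int}(DK_s)$. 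With $M=D^{-1}\Lambda$ this yields $\lambda_1(K_s,M)\ge 1$ and $\det(M)=\det(\Lambda)/\prod_i\sigma_i$, hence $\mathrm{vol}(K)/\det(\Lambda)\le\delta^l(K)\prod_{i=1}^n\sigma_i^{-1}$ for \emph{every} admissible flag.

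It remains to choose the flag so that $\sigma_i=\rho_i(K_s,\Lambda)$ for all $i$ simultaneously; as $\sigma_i\le\rho_i$ always holds, any such flag turns the previous inequality into exactly the asserted bound. Proving that a single nested flag realizes \emph{all} packing minima at once is, I expect, the main obstacle. Greedily maximizing from the top only guarantees that $\sigma_i$ equals the $i$-th packing minimum of the successively projected pair, which a priori is smaller than $\rho_i(K_s,\Lambda)$, since the optimal $i$-dimensional projection for $\rho_i$ need not lie in the chosen hyperplane. This is the packing-minima analogue of the existence of a simultaneously reduced basis for the successive minima, and I would attempt it by an exchange argument on the maximizing subspaces, using the monotonicity $\rho_1\ge\dots\ge\rho_n$ together with the structural facts underlying Proposition \ref{prop:trans}. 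The case $n=2$, where only $\sigma_1$ must be optimized while $\sigma_2=\lambda_1(K_s,\Lambda)$ is automatic, is immediate, and the box and the ellipsoid relative to $\mathbb{Z}^n$ (where $D$ rounds $K_s$ into a ball of first minimum $\rho_1$) show that the construction loses nothing, so that the resulting inequality is sharp.
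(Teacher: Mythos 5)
Your reduction and your flag construction are both sound: for every admissible flag $H_1\subset\cdots\subset H_n$ with values $\sigma_i=\lambda_1(K_s|H_i,\Lambda|H_i)$, the diagonal map $D$ and the induction you sketch do give $\mathrm{int}(DK_s)\cap\Lambda=\{\mathbf 0\}$, hence $\mathrm{vol}(K)/\det(\Lambda)\le\delta^l(K)\prod_{i=1}^n\sigma_i^{-1}$. The gap is exactly where you place it, and it is fatal for the argument as it stands: since $\sigma_i\le\rho_i(K_s,\Lambda)$ for every $i$ and every flag, the inequality $\prod_i\sigma_i\ge\prod_i\rho_i(K_s,\Lambda)$ that you need forces $\sigma_i=\rho_i(K_s,\Lambda)$ for \emph{all} $i$ simultaneously; nothing weaker than the full simultaneous-realization claim suffices, and that claim is precisely what you do not prove. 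Worse, the tool you propose for the exchange argument, ``the monotonicity $\rho_1\ge\cdots\ge\rho_n$'', is false. Take $n=2$, $\Lambda=\mathbb{Z}^2$ and $K=\{{\bf x}\in\mathbb{R}^2:{\bf x}^{T}A{\bf x}\le1\}$ with $A=\left(\begin{smallmatrix}1&1/2\\1/2&1\end{smallmatrix}\right)$, i.e., the ellipse for which $\mathbb{Z}^2$ is a hexagonal lattice. Then $\rho_2(K,\mathbb{Z}^2)=\lambda_1(K,\mathbb{Z}^2)=1$, while Proposition \ref{prop:trans} (with $\gamma=1$, so both bounds coincide) gives $\rho_1(K,\mathbb{Z}^2)=1/\lambda_1(K^\star,\mathbb{Z}^2)=\sqrt{3}/2<1$. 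So the packing minima are not monotone in general; this is exactly why the paper works with the ``record'' indices $j_1<\cdots<j_m$ of conditions (a), (b) in Theorem \ref{volume2a} rather than with a monotone chain. (In dimension $2$ your flag exists trivially, so this does not refute the flag claim itself, but it does invalidate the route you propose for proving it in higher dimensions.)

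It is instructive to see how the paper's proof sidesteps your obstacle entirely: it never attempts to realize all $\rho_i(K_s,\Lambda)$ along one flag, and it never turns $\Lambda$ itself into a packing lattice. Instead, Lemma \ref{lem:rho_projection2} provides a single lattice plane $L$ whose projection preserves only the record minima; induction on the number of records bounds $\delta^l(K|L^\perp)$ from below; and the packing lattice for $K$ is then assembled as a direct sum $\widetilde\Lambda\oplus\overline\Lambda$, where $\widetilde\Lambda$ is a \emph{densest packing lattice of the projection} $K|L^\perp$ --- a new lattice with no relation to $\Lambda|L^\perp$ --- and $\overline\Lambda=\lambda_1(K_s,\Lambda)^{-1}(\Lambda\cap L)$. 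Exploiting the recursive nature of $\delta^l$ in this way lets each block be charged only its record value $\rho_{j_i}(K_s,\Lambda)$, which dominates all $\rho_j$ in that block by condition (b), and this yields Theorem \ref{thm:volume2} as a corollary of the stronger Theorem \ref{volume2a}. To rescue your approach you would have to either prove the simultaneous-flag statement (which, given the failure of monotonicity, I would not expect an exchange argument to deliver, if the statement is true at all), or swap the deformed lattice $D^{-1}\Lambda$ inside each block for an optimal packing lattice of the corresponding projection --- at which point you have reconstructed the paper's proof.
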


Apparently, this inequality is weaker than the conjectured one in Problem \ref{davenport's problem} (see Proposition \ref{prop:trans}) but it taks into account the shape of the body via its  lattice density and, in particular, it improves on the upper bound \cite[Theorem 1.2]{hsx2021}. 

In general, packing  minima are hard to compute. For a convex body $K$ (up to an invertible diagonal transformation) between the unit cross-polytope and the unit cube, we always have $\rho_i(K_s,\mathbb{Z}^n)=\lambda_{n-i+1}(K_s,\mathbb{Z}^n)$ (see Proposition \ref{prop:cube and crosspolytope}), which implies that this kind of convex body and the integer lattice align with Problem \ref{davenport's problem}, i.e., Davenport's Problem. However for the centered simplex 
\begin{align*}
    T_n:=-\overset{n}{\underset{i=1}{\sum}}{\bf e}_i+(n+1)\mathrm{conv}\{{\bf 0},{\bf e}_1,...,{\bf e}_n\},
\end{align*} 
where ${\bf e}_i$ denotes the $i$-th unit vector, it is a challenging and interesting problem to determine its packing minima.

\begin{theorem}\label{thm:rhoiTnZn} For $i=1,\dots,n$ it holds 
    $${\rho}_i(T_n,\mathbb{Z}^n)=\frac{1}{\left\lfloor\frac{n}{i+1}\right\rfloor+1}.$$ 
\end{theorem}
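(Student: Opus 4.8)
The plan is to reduce the computation to a projection problem and then treat the two bounds separately. Writing $\pi_L\colon\mathbb{R}^n\to\mathbb{R}^n/L$ for the canonical projection, the definition of $\rho_i$ unwinds to
\[
\rho_i(T_n,\mathbb{Z}^n)=\sup\bigl\{\lambda_1(\pi_L(T_n),\pi_L(\mathbb{Z}^n)):L\in\mathcal{G}(\mathbb{R}^n,n-i)\bigr\},
\]
since, for a fixed $L$, the condition $(\rho T_n+L)\cap\mathbb{Z}^n\ne L\cap\mathbb{Z}^n$ is equivalent to $\rho\,\pi_L(T_n)$ containing a nonzero point of $\pi_L(\mathbb{Z}^n)$. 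Only rational $L$ matter (for irrational $L$ the projected lattice is dense and the inner quantity is $0$), so I may assume $\pi_L(\mathbb{Z}^n)=:\Gamma$ is a rank-$i$ lattice; set $\mathbf{a}_j:=\pi_L(\mathbf{e}_j)\in\Gamma$, $\mathbf{s}:=\sum_j\mathbf{a}_j$, and $\mathbf{a}_{\mathbf{x}}:=\sum_j x_j\mathbf{a}_j$. Since $T_n=-\mathbf{1}+(n+1)\,\mathrm{conv}\{\mathbf{0},\mathbf{e}_1,\dots,\mathbf{e}_n\}$ with $\mathbf{1}=(1,\dots,1)$, this yields the clean description
\[
\pi_L(T_n)=(n+1)\,\mathrm{conv}\{\mathbf{0},\mathbf{a}_1,\dots,\mathbf{a}_n\}-\mathbf{s},
\]
the workhorse for both inequalities. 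I will also use the Minkowski functional $g_{T_n}(\mathbf{w})=\max\{\sum_j w_j,\ \max_j(-w_j)\}$, so that $\mathbf{w}\in\rho T_n\iff g_{T_n}(\mathbf{w})\le\rho$. Throughout write $m=\lfloor n/(i+1)\rfloor$.

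For the lower bound I would exhibit one subspace attaining $1/(m+1)$. Partition the coordinates into a set $F$ of $f$ ``free'' indices together with $i$ blocks $P_1,\dots,P_i$ of sizes $b_1,\dots,b_i$ with $\sum_k b_k=n-f$, and take $L=\ker\Phi$ for $\Phi(\mathbf{w})=(\sum_{j\in P_1}w_j,\dots,\sum_{j\in P_i}w_j)$, so $\pi_L(\mathbb{Z}^n)\cong\mathbb{Z}^i$ and a direct projection computation (setting the free coordinates to $-1$ to relax the facet $\sum_j w_j\le\,$const) gives
\[
\pi_L(T_n)=\Bigl\{\mathbf{t}\in\mathbb{R}^i:t_k\ge -b_k,\ \textstyle\sum_k t_k\le f+1\Bigr\}.
\]
Minimising the gauge of this simplex over $\mathbb{Z}^i\setminus\{\mathbf{0}\}$ (the extremal test points are $+\mathbf{e}_k$, forcing $\rho\ge 1/(f+1)$, and $-\mathbf{e}_k$, forcing $\rho\ge 1/b_k$) yields $\lambda_1=1/\max\{f+1,\max_k b_k\}$. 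I then optimise over the partition: balancing $f+1$ against $\lceil (n-f)/i\rceil$ and using $n=(i+1)m+r$ with $0\le r\le i$ shows $\min_f\max\{f+1,\lceil (n-f)/i\rceil\}=m+1$, so an admissible choice attains $\lambda_1=1/(m+1)$ and hence $\rho_i(T_n,\mathbb{Z}^n)\ge 1/(m+1)$.

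For the upper bound I must show $\lambda_1(\pi_L(T_n),\Gamma)\le 1/(m+1)$ for every rational $L$; by the displayed description this is equivalent to placing a nonzero point of $(m+1)\Gamma$ inside $\pi_L(T_n)$, i.e. to finding $\boldsymbol{\varepsilon}\in\{-1,0,1\}^n$ with $\mathbf{a}_{\boldsymbol{\varepsilon}}\in(m+1)\Gamma\setminus\{\mathbf{0}\}$. Given such $\boldsymbol{\varepsilon}$, after replacing it by $-\boldsymbol{\varepsilon}$ if necessary so that $\sum_j\varepsilon_j\le 0$, the integer point $\mathbf{p}=\boldsymbol{\varepsilon}$ satisfies $p_j\ge-1$ and $\sum_j p_j\le 0$, hence $\mathbf{p}\in T_n$; then $\tfrac1{m+1}\mathbf{p}\in(\mathbb{Z}^n+L)\setminus L$ has $g_{T_n}$-value $\le 1/(m+1)$, giving the bound. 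Existence of \emph{some} nonzero $\boldsymbol{\varepsilon}\in\{-1,0,1\}^n$ with $\mathbf{a}_{\boldsymbol{\varepsilon}}\in(m+1)\Gamma$ is immediate by pigeonhole: the $2^n$ subset sums map into $\Gamma/(m+1)\Gamma\cong(\mathbb{Z}/(m+1))^i$, and $2^n\ge 2^{(i+1)m}\ge(m+1)^{i+1}>(m+1)^i$ forces two subsets to collide, their difference being the desired $\boldsymbol{\varepsilon}$.

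The hard part will be ensuring $\mathbf{a}_{\boldsymbol{\varepsilon}}\ne\mathbf{0}$, i.e. that the witness is not a genuine linear relation among the $\mathbf{a}_j$. In lattice terms I set $R=\{\mathbf{x}\in\mathbb{Z}^n:\mathbf{a}_{\mathbf{x}}=\mathbf{0}\}$ and $R'=\{\mathbf{x}:\mathbf{a}_{\mathbf{x}}\in(m+1)\Gamma\}=R+(m+1)\mathbb{Z}^n$, so the task is to produce a vector of $R'\setminus R$ with entries in $\{-1,0,1\}$. When $R$ contains no nonzero $\{-1,0,1\}$-vector the pigeonhole vector already lies in $R'\setminus R$ and we are done; the remaining case, in which short relations are present, is the genuine obstacle and amounts to a zero-sum (Davenport-constant type) statement whose relevant threshold is exactly $n\ge(i+1)m=(i+1)\bigl((m+1)-1\bigr)$. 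I expect to settle it by exploiting the signed ($\pm1$) freedom together with the centroid identity $\sum_j\mathbf{a}_j=\mathbf{s}$ and an induction that removes a short relation while tracking how $m$ changes. Matching this combinatorial threshold to the arithmetic of $\lfloor n/(i+1)\rfloor$ is precisely where the exact value of the bound is decided, and is the step I anticipate being the most delicate.
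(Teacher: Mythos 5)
Your projection formula and your lower-bound construction are both sound. In fact your subspace $L=\ker\Phi$ is, in different coordinates, exactly the lattice plane the paper builds for its lower bound (there the $n+1$ vertices of $T_n$ are split into $i+1$ balanced groups and $L$ is taken parallel to each group's convex hull; your free set $F$ corresponds to the group containing the vertex $-\mathbf{1}$, your blocks $P_k$ to the other groups). Your direct computation of $\pi_L(T_n)$ as the simplex $\{t_k\ge -b_k,\ \sum_k t_k\le f+1\}$, of its first minimum $1/\max\{f+1,\max_k b_k\}$, and the balancing argument giving $\rho_i(T_n,\mathbb{Z}^n)\ge 1/(m+1)$ are correct, and arguably cleaner than the paper's two-lemma route.

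The upper bound, however, has a genuine gap, and it is not merely the zero-sum step you defer: the framework itself is too narrow. The step ``i.e.\ to finding $\boldsymbol{\varepsilon}\in\{-1,0,1\}^n$ with $\mathbf{a}_{\boldsymbol{\varepsilon}}\in(m+1)\Gamma\setminus\{\mathbf{0}\}$'' is only a sufficient condition, not an equivalence, and for many lattice planes no such $\boldsymbol{\varepsilon}$ exists even though the bound holds. Concretely, take $L=\mathrm{span}\{\mathbf{e}_1,\dots,\mathbf{e}_{n-i}\}$ (already $n=2$, $i=1$, $L=\mathrm{span}\{\mathbf{e}_1\}$ exhibits the problem). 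Identifying $\mathbb{R}^n/L$ with the last $i$ coordinates, $\Gamma=\mathbb{Z}^i$ and $\pi_L(\boldsymbol{\varepsilon})\in\{-1,0,1\}^i$, so $\pi_L(\boldsymbol{\varepsilon})\in(m+1)\Gamma$ forces $\pi_L(\boldsymbol{\varepsilon})=\mathbf{0}$ whenever $m\ge1$; in your notation, every $\{-1,0,1\}$-vector of $R'=R+(m+1)\mathbb{Z}^n$ already lies in $R=L\cap\mathbb{Z}^n$, so $R'\setminus R$ contains no admissible witness at all. Yet the inequality is true for this $L$: here $\pi_L(T_n)=\{t : t_k\ge-1,\ \sum_k t_k\le n-i+1\}$ contains $(m+1)\mathbf{e}_1$, realized by a point of $T_n$ having one coordinate equal to $m+1$. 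So the needed witnesses genuinely require coordinates larger than $1$, and no refinement of the pigeonhole or zero-sum argument inside $\{-1,0,1\}^n$ can close the gap. The paper avoids this by arguing inside a face: for any lattice plane $L$ there is an $i$-dimensional face $F$ of $T_n$ with $\dim(F|L^\perp)=i$ and $\mathbf{0}\in F|L^\perp$; writing the preimage of $\mathbf{0}$ as $\mathbf{p}=\sum_j\tau_j\mathbf{v}_j$ and pigeonholing the barycentric coordinates $\tau_j$ against the balanced weights $n_j/(n+1)$, $n_j\in\{m,m+1\}$, produces a point of $F$ whose projection equals $(m+1)$ times a projected integer vector --- either $(m+1)(\mathbf{e}_j-\mathbf{e}_{j_0})|L^\perp$ or $(m+1)\mathbf{c}|L^\perp$ with $\mathbf{c}=-\mathbf{1}+\sum_j n_j\mathbf{e}_j\in F\cap\mathbb{Z}^n$, whose entries reach $m$. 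Crucially, since $\dim(F|L^\perp)=i$ the projection is injective on $F$, which is what guarantees the witness is nonzero. That anchoring at a point of an $i$-face projecting to the origin, allowing integer witnesses with large entries, is the idea missing from your approach.
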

Observe, that apparently we have  $\lambda_i(T_n,\mathbb{Z}^n)=1$, $i=1,\dots,n$.

The paper is organized as follows. In the next section we will collect some basic properties of the packing minima, in particular, pointing out their interpolating property between the successive minima and those of the polar body. The proofs of Theorem \ref{thm:volume1} and Theorem \ref{thm:volume2} are given in Section \ref{section:volume_inequality}, and the somehow technical proof of Theorem \ref{thm:rhoiTnZn} is presented in Section \ref{section:TnZn}.

\section{\bf Basic properties of packing minima}\label{section:transference_bound}

The packing minima of a convex body which were introduced and studied in \cite{hsx2021} 
are in our notation the functionals $\rho_i(K_s,\Lambda)$. As the symmetry of the body $K_s$ is not essential 
for many of their basic properties we will point out in the proofs only the necessary changes to the proofs presented in \cite{hsx2021}.

But first we introduce some more notation. For $L\in\mathcal{G}(\mathbb{R}^n,k)$ we denote by $L^\perp\in \mathcal{G}(\mathbb{R}^n,n-k)$ its orthogonal complement, and for $S\subset\mathbb{R}^n$ the orthogonal projection of $S$ onto $L$ is denoted by $S|L$. For a lattice $\Lambda\in\mathcal{L}^n$ let 
$$\mathcal{G}(\Lambda,k):=\{L\in\mathcal{G}(\mathbb{R}^n,k):\dim(L\cap\Lambda)=k\}$$
the set of all $k$-dimensional linear {\it lattice planes} of $\Lambda$.  
These are exactly these planes such $\Lambda|L^\perp$ is again a lattice, 
i.e., a discrete subgroup of $\mathbb{R}^n$. 

With these notation we get as in \cite[Lemma 2.1]{hsx2021} 
the following more convenient description of the packing minima.  

\begin{prop} Let ${\mathcal K}^n_{(o)}$ and $\Lambda\in{\mathcal L}^n$. For $1\leq i\leq n$ it holds 
\begin{equation*}
  \rho_i(K,\Lambda)=\max\{\lambda_1(K|L^\perp,\Lambda|L^\perp): L\in \mathcal{G}(\Lambda,n-i)\}.
\end{equation*}
\label{prop:projdef}
\end{prop}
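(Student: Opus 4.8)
The plan is to reduce the defining infimum to a maximum of first minima of projected lattices, following the scheme of \cite[Lemma 2.1]{hsx2021} while checking that the symmetry of the body is nowhere used. First I would rewrite the membership condition by projection: for a fixed $L\in\mathcal{G}(\mathbb{R}^n,n-i)$, since $\mathbf{0}\in\mathrm{int}(K)$ one always has $L\cap\Lambda\subseteq(\rho K+L)\cap\Lambda$, so the condition $(\rho K+L)\cap\Lambda\neq L\cap\Lambda$ means precisely that some lattice point lies in $\rho K+L$ but outside $L$. Writing such a point as $\rho\mathbf{k}+\boldsymbol{\ell}$ with $\mathbf{k}\in K$, $\boldsymbol{\ell}\in L$ and projecting onto $L^\perp$ shows this to be \emph{equivalent} to the statement that $\rho\,(K|L^\perp)$ contains a nonzero point of $\Lambda|L^\perp$ (the converse direction uses that $\mathbf{v}|L^\perp=\rho(\mathbf{k}|L^\perp)$ forces $\mathbf{v}\in\rho\mathbf{k}+L\subseteq\rho K+L$).

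Next I would split according to whether $L$ is a lattice plane. If $L\notin\mathcal{G}(\Lambda,n-i)$, then $\Lambda|L^\perp$ is not discrete, so its closure contains a line through the origin; since $K|L^\perp$ contains a neighbourhood of $\mathbf{0}$ in $L^\perp$, the set $\rho\,(K|L^\perp)$ meets $\Lambda|L^\perp$ in a nonzero point for \emph{every} $\rho>0$, and such planes impose no constraint on the infimum. If $L\in\mathcal{G}(\Lambda,n-i)$, then $\Lambda|L^\perp$ is a lattice and, by the equivalence above together with the definition of the first successive minimum, the condition holds exactly when $\rho\ge\lambda_1(K|L^\perp,\Lambda|L^\perp)$, the threshold value being admissible because $\lambda_1$ is attained. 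Hence the set of admissible $\rho$ is exactly $\{\rho:\rho\ge\lambda_1(K|L^\perp,\Lambda|L^\perp)\text{ for all }L\in\mathcal{G}(\Lambda,n-i)\}$, and therefore $\rho_i(K,\Lambda)=\sup\{\lambda_1(K|L^\perp,\Lambda|L^\perp):L\in\mathcal{G}(\Lambda,n-i)\}$.

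It remains to upgrade this supremum to a maximum, which I expect to be the only genuine obstacle, since $\mathcal{G}(\Lambda,n-i)$ is infinite. Write $\mu$ for the supremum. It is positive, as every lattice plane contributes a positive value, and finite: for each lattice plane at least one of the first $n-i+1$ linearly independent minimal vectors of $(K,\Lambda)$ fails to lie in the $(n-i)$-dimensional space $L$, and its projection is a nonzero vector of $\Lambda|L^\perp$ lying in $\lambda_{n-i+1}(K,\Lambda)\,(K|L^\perp)$, whence $\lambda_1(K|L^\perp,\Lambda|L^\perp)\le\lambda_{n-i+1}(K,\Lambda)$. To obtain attainment I would show that only finitely many planes yield a value exceeding $\mu/2$. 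Fix $r_0>0$ with $K\supseteq r_0 B^n$; then $K|L^\perp\supseteq r_0 B$ for the $i$-dimensional unit ball $B$, and Minkowski's first theorem applied to this inscribed ball gives $\lambda_1(K|L^\perp,\Lambda|L^\perp)^i\le 2^i\det(\Lambda|L^\perp)/(\kappa_i r_0^i)$ with $\kappa_i=\mathrm{vol}(B)$. Thus a value larger than $\mu/2$ forces $\det(\Lambda|L^\perp)$, and hence $\det(\Lambda\cap L)=\det(\Lambda)/\det(\Lambda|L^\perp)$, to be bounded above by a constant $D$ depending only on $\mu$, $r_0$ and $\Lambda$. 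A primitive rank-$(n-i)$ sublattice of $\Lambda$ of determinant at most $D$ has all successive minima bounded (each is at least the length of a shortest nonzero vector of $\Lambda$, while their product equals the determinant), so it is spanned by lattice vectors of bounded Euclidean norm; as $\Lambda$ has only finitely many such vectors, there are only finitely many such sublattices, hence finitely many planes $L$ with $\lambda_1>\mu/2$. Over this finite set the supremum is attained, which yields the claimed maximum.
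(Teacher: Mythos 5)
Your proof is correct and follows essentially the same route as the paper's: reduce to lattice planes via the structure of closed additive subgroups of $\mathbb{R}^n$, bound $\lambda_1(K|L^\perp,\Lambda|L^\perp)$ by applying Minkowski's theorem to an inscribed ball, and conclude that only finitely many lattice planes can compete for the supremum — the paper defers exactly these steps to \cite[Lemma 2.1]{hsx2021}, whereas you work them out in full. Two cosmetic slips that do not affect the argument: your Minkowski bound forces $\det(\Lambda|L^\perp)$ to be bounded \emph{below} (and hence $\det(\Lambda\cap L)$ above, which is what you use), and the product of the successive minima of $\Lambda\cap L$ is only \emph{comparable} to its determinant (Minkowski's second theorem), not equal to it, which still yields the bounded spanning vectors you need.
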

\begin{proof} As in the proof of \cite[Lemma 2.1]{hsx2021} it follows by the characterization of the closed additive subgroups of $\mathbb{R}^n$ that it suffices to consider lattice planes and that  
\begin{equation*}
  \rho_i(K,\Lambda)=  \sup\{\lambda_1(K|L^\perp,\Lambda|L^\perp): L\in \mathcal{G}(\Lambda,n-i)\}.
\end{equation*}
Let $B^n$ be the $n$-dimensional unit ball, and let $r\,B^n\subseteq K$ for some $r>0$. Then for any $L\in \mathcal{G}(\Lambda,n-i)$ we have along with Minkowski's upper bound in Theorem \ref{thm:minkowski} 
$$
\lambda_1(K|L^\perp,\Lambda|L^\perp)\leq \lambda_1(r\,B^n|L^\perp,\Lambda|L^\perp)\leq 2\cdot\left(\frac{\det(\Lambda|L^\perp)}{\mathrm{vol}_i(r\,B^n|L^\perp)}\right)^\frac{1}{i},
$$
where $\mathrm{vol}_i(\cdot)$ denotes the $i$-dimensional volume, i.e., the Lebesgue measure with respect to $L^\perp$. As in the proof of \cite[Lemma 2.1]{hsx2021} it follows from this inequality that only finitely many lattice planes 
have to be considered in order to find $\sup\{\lambda_1(K|L^\perp,\Lambda|L^\perp): L\in \mathcal{G}(\Lambda,n-i)\}$. 
\end{proof}

Next for a set $S\subseteq\mathbb{R}^n$ let $S^\star:=\{{\bf y}\in\mathbb{R}^n: \langle {\bf x},{\bf y}\rangle\leq 1 \text{ for all }{\bf x}\in S\}$ be its {\it polar set}, and for $\Lambda\in \mathcal{L}^n$ let $\Lambda^\star$ be its {\it polar lattice}, i.e., 
$$
\Lambda^\star:= \{{\bf y}\in\mathbb{R}^n :\langle {\bf x},{\bf y}\rangle\in\mathbb{Z}\text{ for all }{\bf x}\in \Lambda\}.
$$
We remark that (see) 
\begin{equation}
          \mathcal{G}(\Lambda^\star,k)=\{L^\perp: L\in \mathcal{G}(\Lambda,n-k)\}.
\label{eq:polarplane}          
\end{equation}

As mentioned in the introduction, the packing minima were introduced in order to interpolate between the successive minima and the ones of the polar body (with respect to the polar lattice). In our setting it reads as follows. 

\begin{prop} Let ${\mathcal K}^n_{(o)}$, $\Lambda\in{\mathcal L}^n$ and let $\gamma\in(0,1]$ such that $-\gamma K\subseteq K$. For $1\leq i\leq n$ it holds 
$$
                       \frac{\gamma}{\lambda_i(K^\star,\Lambda^\star)} \leq \rho_i(K,\Lambda)\leq\lambda_{n-i+1}(K,\Lambda).
$$
Moreover, we have  $\rho_n(K,\Lambda)=\lambda_1(K,\Lambda)$ and $\rho_1(K,\Lambda)\leq \frac{1}{\lambda_1(K^\star,\Lambda^\star)}$.  
\label{prop:trans}
\end{prop}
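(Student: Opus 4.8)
The plan is to read everything through the projection formula of Proposition~\ref{prop:projdef}, namely $\rho_i(K,\Lambda)=\max\{\lambda_1(K|L^\perp,\Lambda|L^\perp):L\in\mathcal{G}(\Lambda,n-i)\}$, and to transport the successive minima of the polar body onto the projected bodies via the elementary polarity dictionary. Each of the two displayed inequalities and the two ``moreover'' statements will be reduced to a one- or $i$-dimensional computation.

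First I would prove the upper bound $\rho_i(K,\Lambda)\le\lambda_{n-i+1}(K,\Lambda)$. Set $\lambda:=\lambda_{n-i+1}(K,\Lambda)$ and choose $n-i+1$ linearly independent points of $\Lambda$ inside $\lambda K$, spanning a subspace $M$ with $\dim M=n-i+1$. For any $L\in\mathcal{G}(\Lambda,n-i)$ we have $\dim M>\dim L$, so $M\not\subseteq L$, and hence at least one of these points does not lie in $L$; its orthogonal projection onto $L^\perp$ is a nonzero point of $\Lambda|L^\perp$ contained in $\lambda(K|L^\perp)$. This gives $\lambda_1(K|L^\perp,\Lambda|L^\perp)\le\lambda$ for every such $L$, and Proposition~\ref{prop:projdef} yields the claim. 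The equality $\rho_n(K,\Lambda)=\lambda_1(K,\Lambda)$ is then immediate, since the only $L\in\mathcal{G}(\Lambda,0)$ is $\{\mathbf{0}\}$, for which $K|L^\perp=K$ and $\Lambda|L^\perp=\Lambda$.

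The heart of the argument is the lower bound. Here I would use that for every linear subspace $H$ and every $y\in H$ one has $\langle x|H,y\rangle=\langle x,y\rangle$, which gives the identities $(K|H)^\star=K^\star\cap H$ and $(\Lambda|H)^\star=\Lambda^\star\cap H$ (polars taken inside $H$), valid without any symmetry assumption. Writing $\mu:=\lambda_i(K^\star,\Lambda^\star)$, I pick linearly independent $w_1,\dots,w_i\in\mu K^\star\cap\Lambda^\star$ and set $H:=\mathrm{span}\{w_1,\dots,w_i\}\in\mathcal{G}(\Lambda^\star,i)$; by \eqref{eq:polarplane} we may write $H=L^\perp$ with $L\in\mathcal{G}(\Lambda,n-i)$. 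The $w_j$ then witness $\lambda_i\big((K|H)^\star,(\Lambda|H)^\star\big)\le\mu$. Setting $C:=K|H$ and $M:=\Lambda|H$ in the $i$-dimensional space $H$ (note $-\gamma C=(-\gamma K)|H\subseteq C$), the whole bound reduces to the transference inequality $\lambda_1(C,M)\,\lambda_i(C^\star,M^\star)\ge\gamma$.

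To prove this inequality I would take $v\in M\setminus\{\mathbf{0}\}$ with $v\in\lambda_1(C,M)\,C$ and linearly independent $u_1,\dots,u_i\in\lambda_i(C^\star,M^\star)\,C^\star\cap M^\star$; since the $u_j$ span $H$ and $\langle v,u_j\rangle\in\mathbb{Z}$, some index $j_0$ has $|\langle v,u_{j_0}\rangle|\ge1$. Evaluating the support function $h_C(u_{j_0})=\max_{x\in C}\langle x,u_{j_0}\rangle\le\lambda_i(C^\star,M^\star)$ at $v$ gives $\langle v,u_{j_0}\rangle\le\lambda_1(C,M)\,\lambda_i(C^\star,M^\star)$, while evaluating it at $-\gamma v\in C$ (using $-\gamma C\subseteq C$) gives $\langle v,u_{j_0}\rangle\ge-\lambda_1(C,M)\,\lambda_i(C^\star,M^\star)/\gamma$; in either sign of $\langle v,u_{j_0}\rangle$ this forces $\lambda_1(C,M)\,\lambda_i(C^\star,M^\star)\ge\gamma$. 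Combining with $\lambda_i(C^\star,M^\star)\le\mu$ and Proposition~\ref{prop:projdef} yields $\rho_i(K,\Lambda)\ge\lambda_1(K|L^\perp,\Lambda|L^\perp)\ge\gamma/\mu$. For the last assertion $\rho_1(K,\Lambda)\le1/\lambda_1(K^\star,\Lambda^\star)$ I would work on each line $L^\perp$, $L\in\mathcal{G}(\Lambda,n-1)$: writing $K|L^\perp$ as a segment and $\pm g^\star$ for the primitive generators of the rank-one lattice $\Lambda^\star\cap L^\perp$, a direct one-dimensional computation gives $\lambda_1(K|L^\perp,\Lambda|L^\perp)=1/\max\{h_K(g^\star),h_K(-g^\star)\}$; as $\pm g^\star\in\Lambda^\star\setminus\{\mathbf{0}\}$, this maximum is at least $\lambda_1(K^\star,\Lambda^\star)$, and taking the maximum over $L$ finishes. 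The main obstacle is precisely the transference step: setting up the projection--section duality cleanly for non-symmetric $K$ and isolating the sign case-distinction as the unique place where the hypothesis $-\gamma K\subseteq K$ is needed, so that $\gamma$ enters with the correct exponent.
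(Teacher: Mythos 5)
Your proposal is correct and follows essentially the same route as the paper: the projection formula of Proposition~\ref{prop:projdef} combined with the polarity dictionary $(K|H)^\star=K^\star\cap H$, $(\Lambda|H)^\star=\Lambda^\star\cap H$, the transference inequality $\lambda_1(C,M)\,\lambda_i(C^\star,M^\star)\ge\gamma$ proved via integer inner products and the sign adjustment permitted by $-\gamma C\subseteq C$, and the same one-dimensional support-function computation for $\rho_1\le 1/\lambda_1(K^\star,\Lambda^\star)$. The only cosmetic differences are that you prove the upper bound $\rho_i\le\lambda_{n-i+1}$ directly (the paper defers to \cite{hsx2021}) and that you take $H$ to be the span of minima-achieving vectors of $(K^\star,\Lambda^\star)$ rather than fixing a section where the polar minima are attained, which amounts to the same thing.
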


Before giving the proof we remark that for an $o$-symmetric convex body $K$ we may choose $\gamma=1$ 
and hence, replacing $K$ by $K-K$ in the above statement gives \cite[Proposition 1.1]{hsx2021}. 

For the lower bound, assume that $n\ge2$, and we would say that $\gamma=\frac{1}{n}$ is a "nearly best" parameter. Note the centered simplex $T_n=-\overset{n}{\underset{i=1}{\sum}}{\bf e}_i+(n+1)\mathrm{conv}\{{\bf 0},{\bf e}_1,...,{\bf e}_n\}$, mentioned in Section \ref{section:introduction}, and apparently, $-\frac{1}{n}T_n\subseteq T_n$. Theorem \ref{thm:rhoiTnZn} yields that $$\rho_i(T_n,\mathbb{Z}^n)\ge\frac{2}{n+2}\ge\frac{1}{n}$$ fro $i=1,2,...,n$.

Turn to the simplex
$$T_n^\star=\mathrm{conv}\left\{-{\bf e}_1, -{\bf e}_2, ..., -{\bf e}_n, \sum_{i=1}^n {\bf e}_i\right\}.$$
By the remark after Theorem \ref{thm:volume1}, Theorem \ref{lvsm2016} and Proposition \ref{prop:trans} we have for $1\leq i\leq n$ 
\begin{equation*}\label{eq:Tn*}
     \rho_i(T_n^\star,\mathbb{Z}^n)=\lambda_{n-i+1}(T_n^\star,\mathbb{Z}^n)=1, 
\end{equation*}
which also implies a similar lower bound for $\rho_i(T_n,\mathbb{Z}^n)$ by Proposition \ref{prop:trans}. 
Then we can see that $\gamma=\frac{1}{n}$ is definitely a "nearly best" parameter and $T_n$ is a "nearly best" candidate associated with this parameter.

\begin{proof} The upper bound can be proven in the same way as in \cite[Proposition 1.1]{hsx2021}, and by definition of $\rho_n(K,\Lambda)$ it equals to $\lambda_1(K,\Lambda)$. 

For the lower bound, we fix an $L \in\mathcal {G}(\Lambda^\star,i)$ such that 
$\lambda_i(K^\star\cap L,\Lambda^\star\cap L)=\lambda_i(K^\star,\Lambda^\star)$. By Proposition \ref{prop:projdef} and \eqref{eq:polarplane} it suffices to show 
$$ 
\gamma\leq \lambda_i(K^\star\cap L,\Lambda^\star\cap L)\lambda_1(K|L,\Lambda|L).
$$
As $K^\star\cap L^\perp=(K|L)^\star$ and $\Lambda^\star\cap L^\perp=(\Lambda|L)^\star$ this can be rewritten as 
$$ 
\gamma\leq \lambda_i((K|L)^\star,(\Lambda|L)^\star)\lambda_1(K|L,\Lambda|L).
$$
In other words, given an $i$-dimensional convex body $M=K|L$ with $-\gamma M\subseteq M$ and an $i$-dimensional lattice $\Gamma=\Lambda|L$ we want to show 
\begin{equation}
\gamma\leq \lambda_i(M^\star,\Gamma^\star)\lambda_1(M,\Gamma).
\label{eq:toshow} 
\end{equation}
This follows, however, as in the symmetric setting (see, e.g., \cite[Theorem 23.2]{gruber2007}): Let ${\bf v}_1\in\Gamma\setminus\{{\bf 0}\}$, and ${\bf w}_1,\dots,{\bf w}_i\in\Gamma^\star$ be linearly independent such that $(1/\lambda_1(M,\Gamma))\, {\bf v}_1\in M$ and $(1/\lambda_i(M^\star,\Gamma^\star))\,{\bf w}_i\in M^\star$.  Then we also have $(1/\lambda_1(M,\Gamma))\, \pm \gamma {\bf v}_1\in M$ and by the definition of the polar body we conclude for $1\leq i\leq n$
$$
\gamma\langle \pm {\bf v}_1,{\bf w}_i\rangle \leq \lambda_i(M^\star,\Gamma^\star)\lambda_1(M,\Gamma).                      $$
As ${\bf w}_1,\dots,{\bf w}_i\in L$ are linearly independent and ${\bf v}_1\ne 0$ one of the inner products must be non-zero. By adjusting the sign of ${\bf v}_1$ we may assume that it is positive, and by definition of the polar lattice we get \eqref{eq:toshow}. 

Finally we consider the case $i=1$. In view of \eqref{eq:polarplane} we have 
\begin{equation*}
 \rho_1(K,\Lambda) =\max\{\lambda_1(K|{\bf u}^\star, \Lambda|{\bf u}^\star): {\bf u}^\star\in \Lambda^\star\setminus\{\bf 0\}\}, 
\end{equation*}
where we write $\cdot|{\bf u}^\star$ in order to denote the orthogonal projection onto the linear hull of the vector ${\bf u}^\star$.  If ${\bf u}^\star\in \Lambda^\star\setminus\{\bf 0\}$ is a primitive vector, i.e.,  ${\bf u}^\star$ is the only non-zero lattice vector in $\Lambda^\star\cap{\rm conv}\{{\bf 0},{\bf u}^\star\}$ then $\det(\Lambda|{\bf u}^*)=1/\Vert{\bf u}^\star\Vert$, where $\Vert\cdot\Vert$ denotes the Euclidean norm. Hence, 
\begin{equation*}
    \lambda_1(K|{\bf u}^\star)= \min\left\{\frac{1}{\mathrm{h}_K({\bf u}^\star)},\frac{1}{\mathrm{h}_K(-{\bf u}^\star)}\right\},
\end{equation*}
where $\mathrm{h}_K:\mathbb{R}^n\to \mathbb{R}$ with $\mathrm{h}_K({\bf u}) =\max\{\langle{\bf x}, {\bf u}\rangle: {\bf x}\in K\}$ is the support function of $K$. Denoting by $\mathrm{r}_{K^\star}:\mathbb{R}^n\setminus\{{\bf 0}\}\to \mathbb{R}$ given by ${\bf u}\mapsto \max\{\mu>0: \mu\,{\bf u} \in K^\star\}$ the radial function of $K^\star$ we have by polarity that $\mathrm{h}_K({\bf u})=1/\mathrm{r}_{K^\star}({\bf u})$. As $\pm {\bf u}^\star\in (1/\mathrm{r}_{K^\star}(\pm {\bf u}^\star))K^\star$ we get 
\begin{equation*}
    \lambda_1(K|{\bf u}^\star )= \min\left\{\mathrm{r}_{K^\star}({\bf u}^\star),\mathrm{r}_{K^\star}(-{\bf u}^\star)\right\} \leq \frac{1}{\lambda_1(K^\star,\Lambda^\star)}.
    \end{equation*}
Hence, $\rho_1(K,\Lambda)\leq 1/\lambda_1(K^\star,\Lambda^\star)$.    
\end{proof}

\section{\bf Volume inequalities for packing minima}\label{section:volume_inequality}

The main purpose of this section is to prove Theorem \ref{thm:volume1}. To this end we will need the following result from \cite{hhh2016}.

\begin{lemma}[\protect{\cite[Lemma 2.2]{hhh2016}}]\label{hhh2016}
Let $K\in\mathcal{K}_c^n$ and let ${\bf u}_1$, $\cdots$, ${\bf u}_n\in K$ be linearly independent. Then
$$\mathrm{vol}(K)\ge\frac{n+1}{n!}|\det({\bf u}_1,\cdots,{\bf u}_n)|.$$ Equality is attained if and only if $K=\mathrm{conv}\{{\bf u}_1,\cdots,{\bf u}_n,-({\bf u}_1+\cdots+{\bf u}_n)\}$. 
\end{lemma}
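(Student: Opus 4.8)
The plan is to exploit the affine invariance of the inequality to normalize the points, and then to reduce the whole statement to a one–dimensional extremal problem via Fubini's theorem together with the Brunn--Minkowski inequality. First I would observe that both sides transform correctly under $T\in\mathrm{GL}_n(\mathbb{R})$: one has $\mathrm{vol}(TK)=|\det T|\,\mathrm{vol}(K)$, $\det(T{\bf u}_1,\dots,T{\bf u}_n)=\det T\cdot\det({\bf u}_1,\dots,{\bf u}_n)$, and $\mathrm{cen}(TK)=T\,\mathrm{cen}(K)$, so that the centered hypothesis and the equality body are preserved. Choosing $T$ with $T{\bf u}_i={\bf e}_i$ reduces everything to the following: if $K\in\mathcal{K}_c^n$ contains ${\bf e}_1,\dots,{\bf e}_n$, then $\mathrm{vol}(K)\ge\frac{n+1}{n!}$, with equality exactly for $K=\mathrm{conv}\{{\bf e}_1,\dots,{\bf e}_n,-({\bf e}_1+\dots+{\bf e}_n)\}$. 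Note that the right–hand side is precisely the volume of this candidate simplex, which one checks is centered.

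Next I would slice $K$ by the parallel hyperplanes $H_t=\{{\bf x}:f({\bf x})=t\}$ of the linear form $f({\bf x})=\sum_i x_i$ and set $A(t)=\mathrm{vol}_{n-1}(K\cap H_t)$. Three facts drive the argument: (i) by Brunn--Minkowski the function $A^{1/(n-1)}$ is concave on its support; (ii) the centroid condition, read off in the direction of $f$, gives $\int t\,A(t)\,dt=0$; and (iii) since ${\bf e}_1,\dots,{\bf e}_n\in K\cap H_1$ we have $A(1)\ge\mathrm{vol}_{n-1}(\mathrm{conv}\{{\bf e}_1,\dots,{\bf e}_n\})=\frac{\sqrt n}{(n-1)!}$. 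Fubini rewrites $\mathrm{vol}(K)=\frac{1}{\sqrt n}\int A(t)\,dt$, so the problem becomes: minimize $\int A$ over nonnegative $A$ with $A^{1/(n-1)}$ concave, $\int tA=0$ and $A(1)\ge\frac{\sqrt n}{(n-1)!}$. The extremal profile is the \emph{affine cone} $A_*(t)=\frac{\sqrt n}{(n-1)!}\bigl(\frac{t+n}{n+1}\bigr)^{n-1}$ on $[-n,1]$, whose apex position $t=-n$ is forced by the mean–zero condition; one verifies $\frac{1}{\sqrt n}\int A_*=\frac{n+1}{n!}$, and that $A_*$ is exactly the section function of the candidate simplex.

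The main obstacle is proving that this cone is the minimizer. The route I would take is a sign–change/moment comparison: since $A_*^{1/(n-1)}$ is affine while $A^{1/(n-1)}$ is concave, their difference is concave, so $A-A_*$ changes sign at most twice, in the pattern $-,+,-$; combining this sign structure with $\int t(A-A_*)=0$ and $A(1)\ge A_*(1)$ should force $\int(A-A_*)\ge 0$. The delicate points are the bookkeeping at the endpoints of the supports and the contribution of the region $\{f>1\}$, which must be shown to be wasteful, together with arranging the apex of the comparison curve so as to close the inequality uniformly in the unknown left endpoint of $\mathrm{supp}(A)$.

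Finally, for the equality characterization I would trace equality back through the chain: equality in the Brunn--Minkowski step forces the sections $K\cap H_t$ to be homothetic, so that $K$ is a cone over $K\cap H_1$; the one–dimensional extremal analysis then forces $K\cap H_1=\mathrm{conv}\{{\bf e}_1,\dots,{\bf e}_n\}$ and the apex at $-({\bf e}_1+\dots+{\bf e}_n)$, i.e. $K=\mathrm{conv}\{{\bf u}_1,\dots,{\bf u}_n,-({\bf u}_1+\dots+{\bf u}_n)\}$ after undoing the normalization.
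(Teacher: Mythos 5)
You should know at the outset that the paper itself contains no proof of this statement to compare against: Lemma~\ref{hhh2016} is imported verbatim from \cite[Lemma 2.2]{hhh2016}, so your attempt must stand on its own. Your reductions are fine as far as they go: the $\mathrm{GL}_n$-normalization, the slicing with Fubini and Brunn--Minkowski, the value $\mathrm{vol}_{n-1}(\mathrm{conv}\{{\bf e}_1,\dots,{\bf e}_n\})=\tfrac{\sqrt n}{(n-1)!}$, the candidate profile $A_*(t)=\tfrac{\sqrt n}{(n-1)!}\bigl(\tfrac{t+n}{n+1}\bigr)^{n-1}$ on $[-n,1]$, its zero first moment, and $\tfrac1{\sqrt n}\int A_*=\tfrac{n+1}{n!}$ are all correct, and minimizing over all admissible profiles (rather than actual section functions) is legitimate for a lower bound.

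The genuine gap is exactly the step you wave at with ``should force'': the sign-change/moment mechanism does not establish $\int(A-A_*)\ge 0$. Write $[a,b]$ for the support of $A$ and $D=A-A_*$. First, the claimed global pattern $-,+,-$ is wrong: concavity of $A^{1/(n-1)}-A_*^{1/(n-1)}$ is available only on the common interval $[\max(a,-n),1]$, where $D(1)\ge 0$ forces the pattern $-,+$ (ending positive at $t=1$); the overhangs then append $D=A\ge0$ on $(1,b]$ and, if $a<-n$, also on $[a,-n)$, producing patterns like $+,-,+$ with two crossings, for which the single condition $\int tD=0$ says nothing about $\int D$. Second, and more seriously, in the essential regime $-n<a<0$ (when $a\le -n$ the lemma is trivial, since the chord bound $A(t)\ge A(1)\bigl(\tfrac{t-a}{1-a}\bigr)^{n-1}$ on $[a,1]$ already gives $\int A\ge\tfrac{1-a}{n}A(1)\ge\tfrac{n+1}{n}A(1)$) one does get a single crossing at some $r\in[a,1]$, but then the identity
\begin{equation*}
0=\int tD\,dt=\int (t-r)D\,dt+r\int D\,dt,\qquad \int (t-r)D\,dt\ge 0,
\end{equation*}
yields only $r\int D\le 0$: this gives the desired $\int D\ge 0$ when $r\le 0$, and points the \emph{wrong way} when the crossing lies at $r>0$. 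Nothing in your sketch controls the location of the crossing, and that location is in effect the whole content of the inequality; this is precisely why Gr\"unbaum-type proofs anchor the comparison cone so that the two profiles agree at the point where the weight function changes sign (e.g.\ matching the cone to $A$ at $t=0$, or matching masses, typically in a two-step replacement), rather than matching values at $t=1$ as you do. Until this extremal step is closed --- and the equality discussion rests entirely on it --- the argument is incomplete.
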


The proof of Theorem \ref{thm:volume1} essentially follows from the following statement.
\begin{lemma} Let $K\in {\mathcal K}^n_{(o)}$ and $\Lambda\in{\mathcal L}^n$. There exist linearly independent vectors ${\bf v}_1,\dots,{\bf v}_n\in K$ and ${\bf w}_1,\dots,{\bf w}_n\in\Lambda$ such that for $i=1,\dots,n$  
\begin{equation*}
     \rho_{n-i+1}(K,\Lambda)\left({\bf v}_{i}|
     \{{\bf w}_1,\dots,{\bf w}_{i-1}\}^\perp\right) = {\bf w}_i|\{{\bf w}_1,\dots,{\bf w}_{i-1}\}^\perp.
\end{equation*}
\label{lem:vectors}
\end{lemma}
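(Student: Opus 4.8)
The plan is to build the two families inductively, producing ${\bf w}_i$ and ${\bf v}_i$ at the $i$-th step while maintaining the invariant that ${\bf w}_1,\dots,{\bf w}_{i-1}$ and ${\bf v}_1,\dots,{\bf v}_{i-1}$ are each linearly independent and, crucially, that
$$
L_{i-1}:=\mathrm{span}\{{\bf v}_1,\dots,{\bf v}_{i-1}\}=\mathrm{span}\{{\bf w}_1,\dots,{\bf w}_{i-1}\}.
$$
Keeping these two spans equal is exactly what will allow me to control the linear independence of the ${\bf v}_j$ and of the ${\bf w}_j$ simultaneously, and I expect this to be the one genuinely delicate point.

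First I would exploit the projection description of the packing minima. Once ${\bf w}_1,\dots,{\bf w}_{i-1}\in\Lambda$ are linearly independent, their span $L_{i-1}$ is an $(i-1)$-dimensional lattice plane, i.e.\ $L_{i-1}\in\mathcal{G}(\Lambda,i-1)$, so $\Lambda|L_{i-1}^\perp$ is a lattice and $K|L_{i-1}^\perp$ is a convex body containing the origin in its relative interior. Writing $\pi_{i-1}$ for the orthogonal projection onto $L_{i-1}^\perp$, Proposition~\ref{prop:projdef} applied with index $n-i+1$ (for which $n-(n-i+1)=i-1$) shows that $L_{i-1}$ is one of the competing planes, whence
$$
\mu_i:=\lambda_1\bigl(K|L_{i-1}^\perp,\Lambda|L_{i-1}^\perp\bigr)\le\rho_{n-i+1}(K,\Lambda).
$$
I would then take a shortest nonzero vector $\bar{\bf w}_i\in\Lambda|L_{i-1}^\perp$ realizing $\mu_i$ and lift it to some ${\bf w}_i\in\Lambda$ with $\pi_{i-1}({\bf w}_i)=\bar{\bf w}_i$ (possible since $L_{i-1}$ is a lattice plane). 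As $\bar{\bf w}_i\neq{\bf 0}$ we get ${\bf w}_i\notin L_{i-1}$, so ${\bf w}_1,\dots,{\bf w}_i$ stay independent. Setting $\bar{\bf v}_i:=\rho_{n-i+1}(K,\Lambda)^{-1}\bar{\bf w}_i$ and using that $\bar{\bf w}_i\in\mu_i\,(K|L_{i-1}^\perp)$ together with $\mu_i\le\rho_{n-i+1}(K,\Lambda)$ and ${\bf 0}\in K|L_{i-1}^\perp$, a one-line scaling argument gives $\bar{\bf v}_i\in K|L_{i-1}^\perp$; hence $\bar{\bf v}_i$ lifts to some ${\bf v}_i\in K$ with $\pi_{i-1}({\bf v}_i)=\bar{\bf v}_i$. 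By construction $\rho_{n-i+1}(K,\Lambda)\,\pi_{i-1}({\bf v}_i)=\bar{\bf w}_i=\pi_{i-1}({\bf w}_i)$, which is precisely the asserted identity.

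The hard part, and where the invariant earns its keep, is the linear independence of the ${\bf v}_j$. Put $L_i:=L_{i-1}+\mathrm{span}({\bf w}_i)$. Since $\bar{\bf w}_i={\bf w}_i-({\bf w}_i|L_{i-1})$ with both summands in $L_i$, we have $\bar{\bf v}_i\in L_i$; moreover the $\pi_{i-1}$-fiber over $\bar{\bf v}_i$ equals $\bar{\bf v}_i+L_{i-1}\subseteq L_i$. Therefore the lift ${\bf v}_i$ lies automatically in $L_i$, and because $\pi_{i-1}({\bf v}_i)=\bar{\bf v}_i\neq{\bf 0}$ it lies outside $L_{i-1}$. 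Combined with the inductive hypothesis $\mathrm{span}\{{\bf v}_1,\dots,{\bf v}_{i-1}\}=L_{i-1}$, this forces ${\bf v}_1,\dots,{\bf v}_i$ to be independent and to span $L_i$, restoring the invariant. The base case is $i=1$, where $L_0=\{{\bf 0}\}$ and the identity $\rho_n(K,\Lambda)=\lambda_1(K,\Lambda)$ from Proposition~\ref{prop:trans} supplies ${\bf v}_1,{\bf w}_1$ directly. Iterating through $i=1,\dots,n$ then yields the full families, and the only subtlety to argue carefully is the fiber-containment observation that keeps the spans of the two families locked together.
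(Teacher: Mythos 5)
Your proof is correct and is essentially the paper's argument: the paper likewise starts from $\rho_n(K,\Lambda)=\lambda_1(K,\Lambda)$, projects along the lattice vectors already chosen, scales a realizer of the first minimum of the projected pair into the projected body (using that the ratio $\mu_i/\rho_{n-i+1}(K,\Lambda)$ is at most $1$), and lifts back. The only difference is organizational: the paper runs an induction on the dimension, projecting onto $\{{\bf w}_1\}^\perp$ and recursing, which forces it to invoke the monotonicity $\rho_i(K|\{{\bf w}_1\}^\perp,\Lambda|\{{\bf w}_1\}^\perp)\leq\rho_i(K,\Lambda)$ and a composition-of-projections computation, whereas you unroll the recursion into a single iteration that applies Proposition \ref{prop:projdef} directly to the competitor plane $L_{i-1}$, and your explicit span invariant makes the linear independence of the ${\bf v}_j$ (left implicit in the paper) fully rigorous.
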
 

\begin{proof} We use induction on $n$, and for abbreviation we write $\rho_i=\rho_{i}(K,\Lambda)$. For $n=1$ we only have $\rho_1=\lambda_1(K,\Lambda)$ and so there exists a ${\bf v}_1\in K$ such that 
$\rho_1{\bf v}_1={\bf w}_1\in \Lambda$. Next we assume $n>1$. In view of Proposition \ref{prop:trans} we have $\rho_n=\lambda_1(K,\Lambda)$ and so let   ${\bf v}_1\in K$ such that 
${\bf w}_1=\rho_n{\bf v}_1\in \Lambda$. Next consider the $(n-1)$-dimensional convex body $K'=K|\{{\bf w}_1\}^\perp$, the $(n-1)$-dimensional lattice $\Lambda'=\Lambda|\{{\bf w}_1\}^\perp$ and for $1\leq i\leq n-1$ let $\rho_i'=\rho_i(K',\Lambda')$. According to our inductive process we can find linearly independent vectors ${\bf v}_2',\dots, {\bf v}_{n}'\in K$ and  ${\bf w}'_2,\dots,{\bf w}'_{n}\in\Lambda'$ such that for $i=2,\dots,n$
\begin{equation*}
     \rho_{n-i+1}'\left({\bf v}_{i}'|\{{\bf w}_2,\dots,{\bf w}_{i-1}'\}^\perp\right) ={\bf w}_i'|\{{\bf w}_2',\dots,{\bf w}_{i-1}'\}^\perp.
\end{equation*}
Observe that  for $1\leq i\leq n-1$ we have 
\begin{equation*}
     \rho_i'\leq \rho_i,
\end{equation*} 
and hence with $\overline{\bf v}_i= (\rho_i'/\rho_i){\bf v}_i'\in K'$, $2\leq i\leq n$,  we get  
\begin{equation*}
    \rho_{n-i+1} \left(\overline{\bf v}_i|\{{\bf w}_2',\dots,{\bf w}_{i-1}'\}^\perp\right)  ={\bf w}_i'|\{{\bf w}_2',\dots,{\bf w}_{i-1}'\}^\perp.
\end{equation*}
Finally, let $\alpha_i,\beta_i\in\mathbb{R}$ such that ${\bf v}_{i}=\overline{\bf v}_i+\alpha_i{\bf w}_1\in K$ and ${\bf w}_i={\bf w}_i'+\beta_i{\bf w}_1\in\Lambda$ for $2\leq i\leq n$. Then for $2\leq i\leq n$
\begin{equation*}
\begin{split}
      \rho_{n-i+1}\left({\bf v}_{i}|\{{\bf w}_1,\dots,{\bf w}_{i-1}\}^\perp\right) &= \rho_{n-i+1}\left({\bf v}_{i}|\{{\bf w}_1\}^\perp |\{{\bf w}_2|\{{\bf w}_1\}^\perp,\dots,{\bf w}_{i-1}|\{{\bf w}_1\}\}^\perp\right)\\ 
      &= \rho_{n-i+1}'\left(\overline{\bf v}_{i}|\{{\bf w}'_2,\dots,{\bf w}_{i-1}'\}^\perp\right)\\ 
      &={\bf w}_i'|\{{\bf w}_2',\dots,{\bf w}_{i-1}'\}^\perp \\
      &={\bf w}_i|\{{\bf w}_1,\dots,{\bf w}_{i-1}\}^\perp, 
      \end{split} 
\end{equation*}
and so we have found vectors with the desired property.  
\end{proof}

The proof of Theorem \ref{thm:volume1} is now an immediate consequence of the last two lemmas.

\begin{proof}[{\it Proof of Theorem \ref{thm:volume1}}]
Let the vectors ${\bf v}_1,\dots,{\bf v}_n\in K$ and ${\bf w}_1,\dots,{\bf w}_n\in\Lambda$ as in Lemma \ref{lem:vectors}, and let $\rho_i=\rho_i(K,\Lambda)$, $1\leq i\leq n$.  Then Lemma \ref{hhh2016} gives  

\begin{equation*}
\begin{split}
      \mathrm{vol}(K) & \ge\frac{n+1}{n!}|\det({\bf v}_1,\cdots,{\bf v}_n)|\\
                  & =\frac{n+1}{n!}\frac{1}{\rho_n}|\det({\bf w}_1, {\bf v}_2|\{{\bf w}_1\}^\perp,\dots, {\bf v}_n|\{{\bf w}_1\}^\perp)    \\ & 
                  =\frac{n+1}{n!}\frac{1}{\rho_n}\frac{1}{\rho_{n-1}}|\det({\bf w}_1, {\bf w}_2|\{{\bf w}_1\}^\perp,{\bf v}_3|\{{\bf w}_1\}^\perp\dots, {\bf v}_n|\{{\bf w}_1\}^\perp)\\
                  &=\frac{n+1}{n!}\frac{1}{\rho_n}\frac{1}{\rho_{n-1}}|\det({\bf w}_1, {\bf w}_2,{\bf v}_3|\{{\bf w}_1\}^\perp, \dots, {\bf v}_n|\{{\bf w}_1\}^\perp)\\ 
                  &= \frac{n+1}{n!}\frac{1}{\rho_n}\frac{1}{\rho_{n-1}}|\det({\bf w}_1, {\bf w}_2,{\bf v}_3|\{{\bf w}_1, {\bf w}_2\}^\perp\dots, {\bf v}_n|\{{\bf w}_1, {\bf w}_2\}^\perp)  \\ 
                  &=\cdots \\ 
                  &=\frac{n+1}{n!}\prod_{i=1}^n \frac{1}{\rho_i}\,|\det({\bf w}_1,\cdots,{\bf w}_n)|
                  \\
                  &\geq \frac{n+1}{n!}\prod_{i=1}^n \frac{1}{\rho_i}\,\det(\Lambda).
                  % \cdot\overset{n}{\underset{i=1}{\Pi}}\frac1{\overline{\rho}_i}\cdot|\det({\bf v}_1',\cdots,{\bf v}_n')| \\
                  % & \ge\frac{n+1}{n!}\overset{n}{\underset{i=1}{\Pi}}\frac1{\overline{\rho}_i}\cdot\det(\Lambda),
\end{split}
\end{equation*} 
% and equality holds if an only if $|\det({\bf v}_1',\cdots,{\bf v}_n')|=\det(\Lambda)$, that is $\{{\bf v}_1',\cdots,{\bf v}_n'\}$ is a basis of $\Lambda$,  and $$K=\mathrm{conv}\{\widetilde{\bf v}_1,...,\widetilde{\bf v}_n,-(\widetilde{\bf v}_1+\cdots+\widetilde{\bf v}_n)\}.$$

% Thus, the proof of this volume equality (\ref{vinequality1}) is complete.
%%{\bf Shall we say something about equality}
\end{proof}

Now we turn to the Davenport type upper bound, for which we need the following result from \cite{hsx2021}.

\begin{lemma}[\protect{\cite[Lemma 4.4]{hsx2021}}]\label{lem:rho_projection2}
Let $K\in\mathcal{K}^n$, and let $S=\{j\in\{1,...,n-1\}:\rho_j(K_s,\Lambda)>\rho_n(K_s,\Lambda)\}\not=\emptyset$. Let $k^*=\max\{i: i\in S\}$ be the maximal element in $S$. Then there exists an $L\in\mathcal{G}(\Lambda,n-k^*)$ such that for all $j\in S$,
$$\rho_j(K_s,\Lambda)=\rho_j(K_s|L^\perp,\Lambda|L^\perp).$$ 
\end{lemma}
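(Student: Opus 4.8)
The plan is to run the argument through the projection description of the packing minima in Proposition \ref{prop:projdef}, using the monotonicity $\rho_1(K_s,\Lambda)\ge\cdots\ge\rho_n(K_s,\Lambda)$ (a basic property of the packing minima established in \cite{hsx2021}). By monotonicity the set $S$ is the initial segment $\{1,\dots,k^*\}$, and $\rho_{k^*}(K_s,\Lambda)>\rho_{k^*+1}(K_s,\Lambda)=\cdots=\rho_n(K_s,\Lambda)$. I would then fix the candidate plane: a lattice plane $L\in\mathcal{G}(\Lambda,n-k^*)$ attaining the maximum in Proposition \ref{prop:projdef} for the index $k^*$, that is $\lambda_1(K_s|L^\perp,\Lambda|L^\perp)=\rho_{k^*}(K_s,\Lambda)$. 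Abbreviate $\bar K=K_s|L^\perp$ and $\bar\Lambda=\Lambda|L^\perp$, a $k^*$-dimensional $o$-symmetric body and lattice living in $L^\perp$.

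First I would prove the easy inequality $\rho_j(\bar K,\bar\Lambda)\le\rho_j(K_s,\Lambda)$ for every $j\le k^*$. Applying Proposition \ref{prop:projdef} inside $L^\perp$, $\rho_j(\bar K,\bar\Lambda)$ is a maximum of $\lambda_1$ over projections of $\bar\Lambda$ onto the $j$-dimensional lattice planes $N\subseteq L^\perp$; since projecting onto $L^\perp$ and then onto $N\subseteq L^\perp$ is the same as projecting onto $N$, and these $N$ are exactly the spaces $L'^\perp$ for the codimension-$j$ lattice planes $L'\supseteq L$ of $\Lambda$, this maximum runs over a subfamily of the one defining $\rho_j(K_s,\Lambda)$, which gives $\le$. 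For $j=k^*$ this is an equality, because the index equals $\dim L^\perp$ and hence $\rho_{k^*}(\bar K,\bar\Lambda)=\lambda_1(\bar K,\bar\Lambda)=\rho_{k^*}(K_s,\Lambda)$ by Proposition \ref{prop:trans} and the choice of $L$. The structural fact I would isolate here is: writing $\|{\bf x}\|_{K_s}=\min\{t\ge 0:{\bf x}\in tK_s\}$ for the gauge of $K_s$, every ${\bf w}\in\Lambda$ with $\|{\bf w}\|_{K_s}<\rho_{k^*}(K_s,\Lambda)$ lies in $L$, since projection does not increase the gauge and otherwise ${\bf w}|L^\perp$ would be a nonzero vector of $\bar\Lambda$ of gauge below $\lambda_1(\bar K,\bar\Lambda)$.

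It remains to prove $\rho_j(\bar K,\bar\Lambda)\ge\rho_j(K_s,\Lambda)$ for $j<k^*$, which is the heart of the lemma and the step I expect to be the main obstacle; the goal is to show that some plane realizing $\rho_j(K_s,\Lambda)$ can be chosen to contain $L$. Fixing $L_j\in\mathcal{G}(\Lambda,n-j)$ with $\lambda_1(K_s|L_j^\perp,\Lambda|L_j^\perp)=\rho_j(K_s,\Lambda)$ and applying the structural fact to $L_j$ (using $\rho_j\ge\rho_{k^*}$ by monotonicity), every lattice vector of gauge $<\rho_{k^*}(K_s,\Lambda)$ lies in both $L$ and $L_j$; hence the span $U$ of these short vectors satisfies $U\subseteq L\cap L_j$, with $\dim U\le n-k^*=\dim L$ because Proposition \ref{prop:trans} forces $\lambda_{n-k^*+1}(K_s,\Lambda)\ge\rho_{k^*}(K_s,\Lambda)$ and so at most $n-k^*$ successive minima lie below $\rho_{k^*}$. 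In the generic case $\dim U=n-k^*$ one gets $U=L\subseteq L_j$, so $L_j^\perp\subseteq L^\perp$, the optimal projection for $\rho_j$ already lives inside $L^\perp$, and equality follows. The genuine difficulty is the degenerate case $\lambda_{n-k^*}(K_s,\Lambda)\ge\rho_{k^*}(K_s,\Lambda)$, where $U\subsetneq L$ and $L$ needs lattice vectors of gauge exactly $\rho_{k^*}(K_s,\Lambda)$ to be spanned, so that $L\subseteq L_j$ is no longer automatic. Here I would not fix $L$ once and for all but choose it compatibly across the whole range $S$: either strengthen the requirement on $L$ so that it additionally contains a maximal independent family of lattice vectors of gauge $\le\rho_{k^*}(K_s,\Lambda)$, or construct the optimal planes for $\rho_{k^*},\rho_{k^*-1},\dots,\rho_1$ as a descending flag $L=F_{k^*}\subseteq F_{k^*-1}\subseteq\cdots\subseteq F_1$ of lattice planes by a top-down exchange argument, the crux being to show at each step that an optimal plane for the next smaller index can be taken to contain the one already built without decreasing the value of $\lambda_1$ on the corresponding projection.
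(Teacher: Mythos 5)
Your argument collapses at its foundation: the monotonicity $\rho_1(K_s,\Lambda)\ge\rho_2(K_s,\Lambda)\ge\cdots\ge\rho_n(K_s,\Lambda)$ that you attribute to \cite{hsx2021} is false, and neither that paper nor this one asserts it. Indeed, for $o$-symmetric bodies Proposition~\ref{prop:trans} (with $\gamma=1$) gives $\rho_1(K_s,\Lambda)\le 1/\lambda_1(K_s^\star,\Lambda^\star)$ and $\rho_n(K_s,\Lambda)=\lambda_1(K_s,\Lambda)$, so any pair with $\lambda_1(K_s,\Lambda)\,\lambda_1(K_s^\star,\Lambda^\star)>1$ already violates monotonicity. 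This happens for $K_s=B^2$ and the hexagonal lattice, where a direct computation gives $\rho_1=\sqrt{3}/2<1=\rho_2$, and for $K_s=B^8$ with $\Lambda=E_8$, where $\rho_1\le 1/\sqrt{2}<\sqrt{2}=\rho_8$. This non-monotonicity is precisely why the lemma is phrased in terms of the set $S$ (which need not be the initial segment $\{1,\dots,k^*\}$) and why Theorem~\ref{volume2a} needs its hypotheses (a) and (b); under your assumption both statements would be needlessly convoluted. The error then propagates into the heart of your proof: for $j\in S$ you invoke $\rho_j\ge\rho_{k^*}$ ``by monotonicity'' in order to apply your structural fact to $L_j$ and conclude that every lattice vector of gauge less than $\rho_{k^*}(K_s,\Lambda)$ lies in $L_j$. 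But membership in $S$ only guarantees $\rho_j>\rho_n$, and an index $j\in S$ may well have $\rho_n<\rho_j<\rho_{k^*}$; then short lattice vectors need not lie in $L_j$, the inclusion $U\subseteq L\cap L_j$ fails, and the deduction $L\subseteq L_j$ driving your ``generic case'' is lost.

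Second, even inside your own framework the proof is incomplete: the case you yourself flag as the genuine difficulty, when the lattice vectors of gauge below $\rho_{k^*}(K_s,\Lambda)$ span a proper subspace $U\subsetneq L$, is met only with two candidate strategies (strengthening the choice of $L$, or building a descending flag by an exchange argument) whose crux is explicitly deferred. Since this degenerate case certainly occurs, the claimed equality $\rho_j(K_s,\Lambda)=\rho_j(K_s|L^\perp,\Lambda|L^\perp)$ is not established. For calibration: the paper itself offers no proof to compare against, as it imports the statement verbatim from \cite[Lemma 4.4]{hsx2021}, so your argument has to stand alone. What does survive of it is the easy half, namely $\rho_j(K_s|L^\perp,\Lambda|L^\perp)\le\rho_j(K_s,\Lambda)$ for any lattice plane $L$, obtained by factoring projections through $L^\perp$; this is correct and is the same observation used as step $(\mathrm{b}_1)$ in the proof of Theorem~\ref{volume2a}, together with the exactness at $j=k^*$ for an optimal $L$. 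The substantive content of the lemma, the reverse inequality for all $j\in S$ simultaneously, remains unproved in your proposal.
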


Theorem \ref{thm:volume2} is a particular case of the next theorem which is a strengthening of \cite[Theorem 4.4]{hsx2021}.

\begin{theorem}\label{volume2a}
Let $K\in\mathcal{K}^n$ and $\Lambda\in\mathcal{L}^n$. Further, let $0=j_0<j_1<\cdots<j_m<j_{m+1}=n$ with $m\in\mathbb{Z}_{\ge0}$ be such that
\begin{equation*}
\begin{split} 
    \mathrm{(a)}&\quad \rho_{j_1}(K_s,\Lambda)>\rho_{j_2}(K_s,\Lambda)>\cdots>\rho_{j_m}(K_s,\Lambda)>\rho_{j_{m+1}}(K_s,\Lambda)=\rho_n(K_s,\Lambda),\\
    \mathrm{(b)}&\quad\rho_j(K_s,\Lambda)\le\rho_{j_i}(K_s,\Lambda),  \quad j\in\{j_{i-1}+1,..., j_i\}, i\in\{1,...,m+1\}.
 \end{split}    
\end{equation*}
Then,
\begin{align}\label{vinequality2}
    \frac{\mathrm{vol}(K)}{\det(\Lambda)}\le\delta^l(K)\cdot\overset{m+1}{\underset{i=1}{\prod}}\frac1{\rho_{j_{i}}(K_s,\Lambda)^{j_{i}-j_{i-1}}}.
\end{align} 
% In particular,
% \begin{equation*}
%     \frac{\mathrm{vol}(K)}{\det(\Lambda)}\le\delta^l(K)\cdot\overset{n}{\underset{i=1}{\prod}}\frac1{\rho_{{i}}(K_s,\Lambda)}.
% \end{equation*} 
\end{theorem}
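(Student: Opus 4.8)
The plan is to reduce the inequality to the construction of a single anisotropic ``packing map'' and then to invoke the affine invariance of the lattice packing density. Concretely, I would aim to prove the following claim: there is an invertible linear map $\psi\colon\mathbb{R}^n\to\mathbb{R}^n$ with $\det\psi>0$ such that (i) $\mathrm{int}(\psi K_s)\cap(\Lambda\setminus\{{\bf 0}\})=\emptyset$ and (ii) $\det\psi\ge\prod_{i=1}^{m+1}\rho_{j_i}(K_s,\Lambda)^{j_i-j_{i-1}}$. Granting this, the translates $\psi K+{\bf x}$, ${\bf x}\in\Lambda$, have pairwise disjoint interiors, since $\psi K+{\bf x}$ and $\psi K+{\bf y}$ overlap in their interiors exactly when ${\bf x}-{\bf y}\in\mathrm{int}(\psi K-\psi K)=\mathrm{int}(\psi K_s)$, which (i) forbids. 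Hence $\{\psi K+{\bf x}:{\bf x}\in\Lambda\}$ is a lattice packing of $\psi K$, so $\mathrm{vol}(\psi K)/\det\Lambda\le\delta^l(\psi K)=\delta^l(K)$, the last equality being the affine invariance of $\delta^l$. As $\mathrm{vol}(\psi K)=\det\psi\cdot\mathrm{vol}(K)$, rearranging and using (ii) yields \eqref{vinequality2}. This is the anisotropic analogue of the elementary proof of \eqref{eq:davenport1}, recovered here as the case $m=0$ by taking $\psi=\rho_n(K_s,\Lambda)\,\mathrm{id}$: then $\mathrm{int}(\rho_n K_s)$ is lattice-free because $\rho_n(K_s,\Lambda)=\lambda_1(K_s,\Lambda)$ by Proposition \ref{prop:trans}, and $\det\psi=\rho_n^n$.

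I would prove the claim by induction on the dimension, peeling off the last block. Assume $m\ge1$. From (a) and (b) the set $S=\{j<n:\rho_j(K_s,\Lambda)>\rho_n(K_s,\Lambda)\}$ has maximal element $k^\ast=j_m$: indeed $j_m\in S$ by (a), while (b) forces $\rho_j\le\rho_n$, hence $j\notin S$, for every $j>j_m$. Lemma \ref{lem:rho_projection2} then supplies a lattice plane $L\in\mathcal{G}(\Lambda,n-j_m)$ with $\rho_j(K_s,\Lambda)=\rho_j(K_s|L^\perp,\Lambda|L^\perp)$ for all $j\in S$; in particular the values $\rho_{j_1},\dots,\rho_{j_m}$ are preserved on $L^\perp$. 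Writing $K'=K|L^\perp$ and $\Lambda'=\Lambda|L^\perp$ (so $K'_s=K_s|L^\perp$ and $\dim L^\perp=j_m$), I apply the inductive hypothesis to $(K',\Lambda')$ with the truncated data $0<j_1<\dots<j_m$ to get a map $\psi'$ on $L^\perp$ with $\mathrm{int}(\psi'K'_s)$ free of $\Lambda'\setminus\{{\bf 0}\}$ and $\det\psi'\ge\prod_{i=1}^m\rho_{j_i}(K'_s,\Lambda')^{j_i-j_{i-1}}=\prod_{i=1}^m\rho_{j_i}(K_s,\Lambda)^{j_i-j_{i-1}}$. I then set $\psi=\psi'\oplus\rho_n(K_s,\Lambda)\,\mathrm{id}_L$ in the orthogonal splitting $\mathbb{R}^n=L^\perp\oplus L$. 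Property (ii) is immediate: $\det\psi=\det\psi'\cdot\rho_n^{\,n-j_m}\ge\rho_n^{\,n-j_m}\prod_{i=1}^m\rho_{j_i}^{j_i-j_{i-1}}=\prod_{i=1}^{m+1}\rho_{j_i}^{j_i-j_{i-1}}$, since $j_{m+1}=n$ and $\rho_{j_{m+1}}=\rho_n$. For (i), suppose ${\bf x}\in\Lambda\setminus\{{\bf 0}\}$ lies in $\mathrm{int}(\psi K_s)$. As $\psi$ is block diagonal, projection onto $L^\perp$ gives ${\bf x}|L^\perp\in\Lambda'$ with ${\bf x}|L^\perp\in\mathrm{int}(\psi'K'_s)$ (the projection of the open set $\mathrm{int}(\psi K_s)$ lands in the interior of its image $\psi'K'_s$); if ${\bf x}|L^\perp\ne{\bf 0}$ this contradicts freeness for $\psi'$. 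If ${\bf x}|L^\perp={\bf 0}$ then ${\bf x}\in(\Lambda\cap L)\setminus\{{\bf 0}\}$ and ${\bf x}\in\mathrm{int}(\psi K_s)\cap L\subseteq\rho_n\,\mathrm{int}_L(K_s\cap L)$; but $\lambda_1(K_s\cap L,\Lambda\cap L)\ge\lambda_1(K_s,\Lambda)=\rho_n$ because $K_s\cap L\subseteq K_s$ and $\Lambda\cap L\subseteq\Lambda$, so this set contains no nonzero point of $\Lambda\cap L$, again a contradiction. This proves the claim and hence the theorem. Finally, Theorem \ref{thm:volume2} follows from \eqref{vinequality2} by taking the right-to-left record partition of $\rho_1,\dots,\rho_n$ and using (b) in the form $\rho_j(K_s,\Lambda)\le\rho_{j_i}(K_s,\Lambda)$, which gives $\prod_{j=j_{i-1}+1}^{j_i}\rho_j^{-1}\ge\rho_{j_i}^{-(j_i-j_{i-1})}$.

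The main obstacle is the legitimacy of the inductive call, i.e.\ verifying that $(K',\Lambda')$ with the truncated partition again satisfies (a) and (b). Condition (a) is immediate from the value preservation in Lemma \ref{lem:rho_projection2}. Condition (b), however, requires controlling $\rho_j(K'_s,\Lambda')$ for the indices $j$ lying in a block with $j\notin S$, which Lemma \ref{lem:rho_projection2} does not address. Since packing minima are \emph{not} monotone under projection in general --- a projection can both raise and lower a first minimum $\lambda_1$, and a $j$-dimensional lattice plane of $\Lambda'=\Lambda|L^\perp$ need not be a lattice plane of $\Lambda$ --- one cannot simply assert $\rho_j(K'_s,\Lambda')\le\rho_j(K_s,\Lambda)$ for an arbitrary lattice plane. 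The crux is therefore to show that the \emph{specific} plane $L$ produced by Lemma \ref{lem:rho_projection2} satisfies $\rho_j(K_s|L^\perp,\Lambda|L^\perp)\le\rho_j(K_s,\Lambda)$ for all $1\le j\le k^\ast$; combined with the equality on $S$, this keeps every index $j\notin S$ out of the projected set $S'$ and forces $k^\ast$ to drop to $j_{m-1}$ at the next stage, so that (b) is inherited. I expect this monotonicity to come out of the construction of $L$ in Lemma \ref{lem:rho_projection2} (through the compatibility of the lattice planes of $\Lambda$ and of $\Lambda|L^\perp$ realizing the projected minima, via Proposition \ref{prop:projdef}), and making this step rigorous is the technical heart of the argument.
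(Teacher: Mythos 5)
Your proof skeleton is the same as the paper's: induction on $m$, an application of Lemma \ref{lem:rho_projection2} with $k^\ast=j_m$ to produce the plane $L$, and an orthogonal splitting $\mathbb{R}^n=L^\perp\oplus L$ in which the $L$-directions are handled via $\rho_n(K_s,\Lambda)=\lambda_1(K_s,\Lambda)$. The only structural difference is packaging: you keep $\Lambda$ fixed and shrink the body by the block map $\psi=\psi'\oplus\rho_n\,\mathrm{id}_L$, then invoke affine invariance of $\delta^l$, whereas the paper keeps $K$ fixed and builds a packing lattice $\widetilde\Lambda\oplus\overline\Lambda$ from a densest packing lattice $\widetilde\Lambda$ of $K|L^\perp$ and $\overline\Lambda=\lambda_1(K_s,\Lambda)^{-1}(\Lambda\cap L)$. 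These are equivalent formulations, and your case analysis for why $\Lambda$ packs $\psi K$ (projecting onto $L^\perp$ when ${\bf x}|L^\perp\ne{\bf 0}$, intersecting with $L$ otherwise) is precisely the argument the paper leaves implicit for $\widetilde\Lambda\oplus\overline\Lambda$.

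The genuine gap is the step you yourself flag and postpone: the inheritance of hypothesis (b) by the projected data, i.e.\ $\rho_j(K_s|L^\perp,\Lambda|L^\perp)\le\rho_j(K_s,\Lambda)$. Your diagnosis of this step is wrong in both directions: it is not an open technical problem requiring the special structure of the $L$ from Lemma \ref{lem:rho_projection2}, and your claim that such monotonicity ``cannot simply be asserted for an arbitrary lattice plane'' is false. It holds for \emph{every} $L\in\mathcal{G}(\Lambda,n-k)$ and every $j\le k$, as a direct consequence of Proposition \ref{prop:projdef} --- which is exactly what the paper cites at its step $(\mathrm{b}_1)$. The point you miss is that one does not need a lattice plane $M'$ of $\Lambda|L^\perp$ to be a lattice plane of $\Lambda$; one only needs its lift. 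Given $M'\in\mathcal{G}(\Lambda|L^\perp,k-j)$, choose ${\bf w}_1,\dots,{\bf w}_{k-j}\in\Lambda$ whose projections onto $L^\perp$ span $M'$, and set $M:=L+\mathrm{span}\{{\bf w}_1,\dots,{\bf w}_{k-j}\}=L\oplus M'$. Then $M$ is spanned by lattice vectors of $\Lambda$, hence $M\in\mathcal{G}(\Lambda,n-j)$; moreover the orthogonal complement of $M$ in $\mathbb{R}^n$ coincides with the orthogonal complement of $M'$ taken inside $L^\perp$, and since projections onto nested subspaces compose, $S|M^\perp=(S|L^\perp)|M^\perp$ for any set $S$. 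Therefore
\begin{equation*}
\lambda_1\bigl((K_s|L^\perp)|M^\perp,(\Lambda|L^\perp)|M^\perp\bigr)=\lambda_1\bigl(K_s|M^\perp,\Lambda|M^\perp\bigr)\le\rho_j(K_s,\Lambda),
\end{equation*}
and taking the maximum over all such $M'$ (Proposition \ref{prop:projdef} applied inside $L^\perp$) gives $\rho_j(K_s|L^\perp,\Lambda|L^\perp)\le\rho_j(K_s,\Lambda)$. Your counterexamples only show that $\lambda_1$ of a single projection is not monotone; $\rho_j$ is a maximum over all lattice-plane projections, and the lifted planes $L\oplus M'$ form a subfamily of those competing for $\rho_j(K_s,\Lambda)$ --- that is exactly what forces monotonicity. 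With this short lemma inserted, conditions (a) and (b) pass to $(K_s|L^\perp,\Lambda|L^\perp)$ with the truncated index set, your inductive call becomes legitimate, and the rest of your argument (including the deduction of Theorem \ref{thm:volume2} from \eqref{vinequality2}) is correct.
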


\begin{proof} We will use induction on $m$. If $m=0$ then $j_1=n$ and with  $\rho_n(K_s,\Lambda)=\lambda_1(K_s,\Lambda)$ the inequality becomes   \eqref{eq:davenport1}. So let $m\geq 1 $, and  let 
 $$S=\{j_1, j_2,..., j_{m-1}, j_{m}\}.$$ By Lemma \ref{lem:rho_projection2}, there exists an $L\in\mathcal{G}(\Lambda,n-j_m)$ such that for $l\in S$,
$$\rho_l(K_s,\Lambda)=\rho_l(K_s|L^\perp,\Lambda|L^\perp).$$
Then, by assumption we have
\begin{equation*}
    \mathrm{(a_1)}\quad  \rho_{j_1}(K_s|L^\perp,\Lambda|L^\perp)>\rho_{j_2}(K_s|L^\perp,\Lambda|L^\perp)>\cdots>\rho_{j_m}(K_s|L^\perp,\Lambda|L^\perp).
\end{equation*}
Moreover, in view of Proposition \ref{prop:projdef} and assumption (b) from the theorem, we also get for $j\in\{j_{i-1}+1,..., j_i\}$ and $i\in\{1,...,m\}$
\begin{equation*}
  \mathrm{(b_1)}\quad \rho_j(K_s|L^\perp,\Lambda|L^\perp)\le\rho_j(K_s,\Lambda)\leq \rho_{j_i}(K_s,\Lambda) =\rho_{j_i}(K_s|L^\perp,\Lambda|L^\perp).
\end{equation*}
Hence, by our induction approach we conclude 
\begin{equation}
\begin{split}
    \delta^l(K|L^\perp) &\geq \frac{\mathrm{vol}_{j_m}(K|L^\perp)}{\det(\Lambda|L^\perp)}\prod_{i=1}^{m} \rho_{j_i}(K_s|L^\perp,\Lambda|L^\perp)^{j_i-j_{i-1}} \\ &= \frac{\mathrm{vol}_{j_m}(K|L^\perp)}{\det(\Lambda|L^\perp)}\prod_{i=1}^{m} \rho_{j_i}(K_s,\Lambda)^{j_i-j_{i-1}}
\label{eq:induct}    
\end{split}     
\end{equation}
Let $\widetilde\Lambda\subset L^\perp$ be a densest packing lattice of $K|L^\perp$, i.e., 
\begin{equation*} 
   \delta(K|L^\perp)=\frac{\mathrm{vol}_{j_m}(K|L^\perp)}{\det(\widetilde\Lambda)},
\end{equation*} 
and let $\overline\Lambda\subset L$ be $(n-j_m)$-dimensional lattice 
$$
\overline\Lambda=\frac{1}{\lambda_1(K_s,\Lambda)}\left(\Lambda\cap L\right).$$
Then $\widetilde\Lambda\oplus\overline \Lambda$ is a packing lattice of $K$, or equivalently of $\frac{1}{2} K_s$, and with \eqref{eq:induct} we get 
\begin{equation*}
    \begin{split}
        \delta^l(K)&\geq \frac{\mathrm{vol}(K)}{\det(\widetilde\Lambda\oplus\overline \Lambda)} \\ &= \delta(K|L^\perp)\frac{\mathrm{vol}(K)}{\mathrm{vol}_{j_m}(K|L^\perp)} \frac{\lambda_1(K_s,\Lambda)^{n-j_m}}{\det(\Lambda\cap L)} \\   
         &\geq \frac{\mathrm{vol}(K)}{\det(\Lambda|L^\perp)\det(\Lambda\cap L)}\lambda_1(K_s,\Lambda)^{n-j_m}\prod_{i=1}^{m} \rho_{j_i}(K_s,\Lambda)^{j_i-j_{i-1}} \\
         &= \frac{\mathrm{vol}(K)}{\det(\Lambda)}\prod_{i=1}^{m+1} \rho_{j_i}(K_s,\Lambda)^{j_i-j_{i-1}}.
    \end{split}
\end{equation*}
\end{proof}

\begin{proof}[Proof of Theorem \ref{thm:volume2}] With the notation of Theorem \ref{volume2a} we immediately get by \eqref{vinequality2} and condition (b)
\begin{equation*}
    \frac{\mathrm{vol}(K)}{\det(\Lambda)}\le\delta^l(K)\cdot\overset{m+1}{\underset{i=1}{\prod}}\frac1{\rho_{j_{i}}(K_s,\Lambda)^{j_{i}-j_{i-1}}}\leq \delta^l(K)\cdot\overset{n}{\underset{i=1}{\prod}}\frac1{\rho_{{i}}(K_s,\Lambda)}.
\end{equation*} 
\end{proof}

\section{\bf Packing minima of some convex bodies}\label{section:TnZn}

In this section, we will study the packing minima of some particular convex bodies with respect to the integer lattice. 

Firstly, let $C_n=\{{\bf x}=(x_1,x_2,...,x_n)\in\mathbb{R}^n:|x_i|\le1\}$ be the unit cube and $C_n^\star=\rm{conv}\{\pm{\bf e}_i:1\le i\le n\}$ be the unit cross-polytope, the polar body of $C_n$. Furthermore, let $\mathcal{A}_n=\{\rm{diag}({\it a}_1,...,{\it a}_n):{\it a}_i\in\mathbb{R}_{>0}\}$ be the group of all diagonal matrics with positive entries on the diagonal, and let
\begin{equation*}
    \mathcal{K}^n_\mathcal{A}:=\{K\in\mathcal{K}^n:\text{There exists an }A\in\mathcal{A}_n\text{ with }C_n^\star\subseteq AK\subseteq C_n\}.
\end{equation*}
For instance, unconditional bodies or more general, $o$-symmteric locally anti-blocking bodies (see \cite{aass2023,kos2020}) belong to this class.

Let now $K\in\mathcal{K}^n_\mathcal{A}$ with $A=\rm{diag}({\it a}_1,...,{\it a}_n)\in\mathcal{A}_n$. By the definition above we have
\begin{align*}
    \rm{conv}\left\{\pm\frac{1}{{\it a}_i}{\bf e}_i:1\le i\le n\right\}\subseteq K\subseteq\left\{{\bf x}:|x_i|\le\frac{1}{{\it a}_i},1\le i\le n\right\}
\end{align*}
and assuming $a_1\le a_2\le\cdots\le a_n$ we get for $1\le i\le n$
\begin{align*}
    \lambda_i(K,\mathbb{Z}^n)=a_i.
\end{align*}
Apparently, $K^\star\in\mathcal{K}^n_\mathcal{A}$ as well, with associated diagonal matrix $A^{-1}$, and also $K_s=K-K\in\mathcal{K}^n_\mathcal{A}$ with matrix $2A$. Hence we get the following result by Proposition \ref{prop:trans}.

\begin{prop} Let $K\in\mathcal{K}^n_\mathcal{A}$. Then for $1\leq i\leq n$ 
\begin{equation*}
       \frac{1}{\lambda_{i}(K_s^\star,\mathbb{Z}^n)}=\rho_i(K_s,\mathbb{Z}^n)=\lambda_{n-i+1}(K_s,\mathbb{Z}^n).
\end{equation*} 
\label{prop:cube and crosspolytope}
\end{prop}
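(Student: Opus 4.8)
The plan is to combine the two-sided transference bound of Proposition \ref{prop:trans} with the successive minima of $\mathcal{K}^n_\mathcal{A}$-bodies computed just above the statement, observing that for such bodies the lower and upper bounds of that inequality coincide and thereby pin down the packing minimum by a squeeze.

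First I would record the two structural facts already assembled. Since $K\in\mathcal{K}^n_\mathcal{A}$, its difference body $K_s = K - K$ again lies in $\mathcal{K}^n_\mathcal{A}$, say with associated diagonal matrix $B = \mathrm{diag}(b_1,\dots,b_n)$; after permuting coordinates I may assume $b_1 \le \cdots \le b_n$, so that $\lambda_j(K_s,\mathbb{Z}^n) = b_j$. The body $K_s$ is moreover $o$-symmetric, and polarity turns the inclusions $C_n^\star \subseteq B K_s \subseteq C_n$ into $C_n^\star \subseteq B^{-1} K_s^\star \subseteq C_n$; hence $K_s^\star \in \mathcal{K}^n_\mathcal{A}$ with the reciprocal diagonal matrix $B^{-1} = \mathrm{diag}(1/b_1,\dots,1/b_n)$.

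Next I would establish the outer equality. Sorting the diagonal of $B^{-1}$ increasingly gives $1/b_n \le \cdots \le 1/b_1$, so the same successive minima formula applied to $K_s^\star$ yields $\lambda_i(K_s^\star,\mathbb{Z}^n) = 1/b_{n-i+1}$. Consequently
\[
\frac{1}{\lambda_i(K_s^\star,\mathbb{Z}^n)} = b_{n-i+1} = \lambda_{n-i+1}(K_s,\mathbb{Z}^n),
\]
which is precisely the first and last members of the asserted chain; note that this is insensitive to the exact scalar relating $B$ to $A$.

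Finally I would trap the packing minimum between them. Applying Proposition \ref{prop:trans} to the $o$-symmetric body $K_s$ with $\gamma = 1$ (legitimate since $-K_s = K_s$) and $\Lambda = \mathbb{Z}^n$, and using that $\mathbb{Z}^n$ is self-dual so that $\Lambda^\star = \mathbb{Z}^n$, gives
\[
\frac{1}{\lambda_i(K_s^\star,\mathbb{Z}^n)} \le \rho_i(K_s,\mathbb{Z}^n) \le \lambda_{n-i+1}(K_s,\mathbb{Z}^n).
\]
Since the previous step makes the two outer quantities equal, $\rho_i(K_s,\mathbb{Z}^n)$ is caught between equal values and must coincide with both, closing the chain. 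I do not expect a genuine obstacle here: the only thing to watch is the order reversal of the diagonal entries under polarity, which is exactly what aligns the index $i$ for $K_s^\star$ with the index $n-i+1$ for $K_s$; the conceptual point is simply that membership in $\mathcal{K}^n_\mathcal{A}$ is what forces the transference inequality of Proposition \ref{prop:trans} to become tight.
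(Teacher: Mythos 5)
Your proof is correct and follows essentially the same route as the paper: the paper likewise computes the successive minima of $K_s$ and $K_s^\star$ from their diagonal matrices ($2A$ and $(2A)^{-1}$, the scalar being irrelevant), observes the order reversal under polarity, and then cites Proposition \ref{prop:trans} with $\gamma=1$ and $(\mathbb{Z}^n)^\star=\mathbb{Z}^n$ to squeeze $\rho_i(K_s,\mathbb{Z}^n)$ between the two coinciding bounds.
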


Next we turn to the simplex 
\begin{align*}
    T_n=-\overset{n}{\underset{i=1}{\sum}}{\bf e}_i+(n+1)\mathrm{conv}\{{\bf 0},{\bf e}_1,...,{\bf e}_n\}. 
\end{align*} 
As Section \ref{section:transference_bound} shows, $\rho_i(T_n^\star,\mathbb{Z}^n)=1$ holds for all $i$, which can be deduced directly by the konwn results. For the completeness, we restate it here as a proposition, which means $ T_n^\star$ and the integer lattice also algin with Problem \ref{davenport's problem}.

\begin{prop}\label{prop:rhoiTn*Zn}
For $i=1,\dots,n$ it holds 
    $${\rho}_i(T_n^\star,\mathbb{Z}^n)=\lambda_{n-i+1}(T_n^\star,\mathbb{Z}^n)=1.$$ 
\end{prop}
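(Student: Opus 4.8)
The plan is to recognise $T_n^\star$ as an extremal body for the lower bounds of both Theorem \ref{lvsm2016} and Theorem \ref{thm:volume1}, and then to squeeze the packing minima between the two sides of the resulting chain of inequalities. First I would rewrite the polar simplex in the extremal form appearing in Theorem \ref{lvsm2016}. Setting $\mathbf{b}_i=-\mathbf{e}_i$, which is a basis of $\mathbb{Z}^n$, and $\nu_i=1$ for $1\le i\le n$, one has
\[
T_n^\star=\mathrm{conv}\{-\mathbf{e}_1,\dots,-\mathbf{e}_n,\textstyle\sum_{i=1}^n\mathbf{e}_i\}=\mathrm{conv}\{\nu_1\mathbf{b}_1,\dots,\nu_n\mathbf{b}_n,-(\nu_1\mathbf{b}_1+\cdots+\nu_n\mathbf{b}_n)\}.
\]
Hence $T_n^\star$ is precisely a body attaining equality in Theorem \ref{lvsm2016}, so that $\frac{n+1}{n!}\prod_{i=1}^n 1/\lambda_i(T_n^\star,\mathbb{Z}^n)=\mathrm{vol}(T_n^\star)/\det(\mathbb{Z}^n)$.

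Next I would compute the successive minima directly and show $\lambda_j(T_n^\star,\mathbb{Z}^n)=1$ for all $j$. The inclusions $-\mathbf{e}_1,\dots,-\mathbf{e}_n\in T_n^\star$ provide $n$ linearly independent lattice points in $T_n^\star$, whence $\lambda_j(T_n^\star,\mathbb{Z}^n)\le 1$ for every $j$. For the reverse bound it suffices to check that $\mathbf{0}$ is the only lattice point in the interior of $T_n^\star$. Using the facet description $T_n^\star=\{\mathbf{x}:\sum_i x_i\ge -1,\ (n+1)x_k-\sum_i x_i\le 1\text{ for }1\le k\le n\}$, an integer point $\mathbf{z}$ in the interior satisfies $\sum_i z_i\ge 0$ and $(n+1)z_k\le\sum_i z_i$ for each $k$; summing the latter over $k$ forces $\sum_i z_i\le 0$, hence $\sum_i z_i=0$ and then $z_k\le 0$ for all $k$, so $\mathbf{z}=\mathbf{0}$. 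This yields $\lambda_1(T_n^\star,\mathbb{Z}^n)\ge 1$ and therefore $\lambda_j(T_n^\star,\mathbb{Z}^n)=1$ for $j=1,\dots,n$.

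Finally I would combine the two observations with the transference bound. By Proposition \ref{prop:trans} we have $\rho_i(T_n^\star,\mathbb{Z}^n)\le\lambda_{n-i+1}(T_n^\star,\mathbb{Z}^n)=1$ for each $i$, and as noted in the remark following Theorem \ref{thm:volume1} this gives the chain
\[
\frac{n+1}{n!}\prod_{i=1}^n\frac{1}{\lambda_{n-i+1}(T_n^\star,\mathbb{Z}^n)}\le\frac{n+1}{n!}\prod_{i=1}^n\frac{1}{\rho_i(T_n^\star,\mathbb{Z}^n)}\le\frac{\mathrm{vol}(T_n^\star)}{\det(\mathbb{Z}^n)}.
\]
Since $T_n^\star$ attains equality in Theorem \ref{lvsm2016}, the leftmost and rightmost quantities coincide, so all inequalities are equalities; in particular $\prod_i 1/\rho_i=\prod_i 1/\lambda_{n-i+1}$ while each factor satisfies $1/\rho_i\ge 1/\lambda_{n-i+1}=1$, forcing $\rho_i(T_n^\star,\mathbb{Z}^n)=\lambda_{n-i+1}(T_n^\star,\mathbb{Z}^n)=1$ for every $i$. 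The only genuine computation is the interior lattice-point count in the second step; everything else is a formal consequence of the extremal characterisation and the transference inequality, so I expect that count to be the main (and only mildly technical) obstacle.
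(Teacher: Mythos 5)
Your proposal is correct and follows essentially the same route as the paper: the paper also deduces this proposition by recognising $T_n^\star$ (with ${\bf b}_i=-{\bf e}_i$, $\nu_i=1$) as an equality case of Theorem \ref{lvsm2016}, invoking $\rho_i\leq\lambda_{n-i+1}$ from Proposition \ref{prop:trans}, and squeezing via the remark after Theorem \ref{thm:volume1}. Your only addition is the explicit facet-description computation verifying $\lambda_j(T_n^\star,\mathbb{Z}^n)=1$, a detail the paper treats as immediate.
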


Unfortunately, the packing minima of $T_n$ itself are much more involved. 

The proof of  Theorem \ref{thm:rhoiTnZn} 
is based on an interesting and elegant observation: along certain special directions, the projection of the entire simplex $T_n$ coincides with one of its $i$-dimensional subfacets. Moreover, the distribution of the projected integer points happens to satisfy the extremal condition, which allows us to verify the equality by establishing the two opposing inequalities. 

For convenience, let ${\bf v}_0=(-1,-1,...,-1)$, ${\bf v}_1=(n,-1,...,-1)$, $\cdots$, ${\bf v}_n=(-1,...,-1,n)$ be the vertices of $T_n$, i.e., $T_n=\mathrm{conv}\{{\bf v}_0,{\bf v}_1,\cdots,{\bf v}_n\}$, and let $V(T_n)=\{{\bf v}_0,{\bf v}_1,\cdots,{\bf v}_n\}$. Then, we need the following lemmas.

\begin{lemma}\label{lem:upTnZn}
    For $i=1,2,...,n$, let $L\in\mathcal{G}(\mathbb{Z}^n,n-i)$ and let $F$ be an $i$-dimensional face of $T_n$ such that $\dim(F|L^\perp)=i$ and ${\bf 0}\in F|L^\perp$. Then 
    \begin{align*}
    \lambda_1(F|L^\perp,\mathbb{Z}^n|L^\perp)\leq\frac{1}{\left\lfloor\frac{n}{i+1}\right\rfloor+1}.  
    \end{align*}
\end{lemma}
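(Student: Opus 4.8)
The plan is to translate the statement into a pure lattice-point problem and then exploit the special arithmetic of the vertices of $T_n$. Set $q=\lfloor n/(i+1)\rfloor$ and write $\Gamma=\mathbb{Z}^n|L^\perp$ and $P=F|L^\perp$. Since $L\in\mathcal{G}(\mathbb{Z}^n,n-i)$ is a lattice plane, $\Gamma$ is a genuine $i$-dimensional lattice, and by hypothesis $P$ is an $i$-dimensional simplex with $\mathbf{0}\in P$. By definition of the first successive minimum, the asserted bound $\lambda_1(P,\Gamma)\le 1/(q+1)$ is \emph{equivalent} to producing a vector $\mathbf{g}\in\Gamma\setminus\{\mathbf{0}\}$ with $(q+1)\mathbf{g}\in P$; equivalently, the shrunken simplex $\tfrac{1}{q+1}P$ (which contains $\mathbf{0}$, as $\mathbf{0}\in P$) must contain a nonzero point of $\Gamma$. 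So everything reduces to locating one such lattice point.

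First I would record the arithmetic of $T_n$. Writing $\mathbf{e}_0:=\mathbf{0}$, the vertices are $\mathbf{v}_j=-\sum_{l=1}^n\mathbf{e}_l+(n+1)\mathbf{e}_j$ for $j=0,\dots,n$, so that $\mathbf{v}_a-\mathbf{v}_b=(n+1)(\mathbf{e}_a-\mathbf{e}_b)$ and $\sum_{j=0}^n\mathbf{v}_j=\mathbf{0}$. Projecting onto $L^\perp$, the images $\mathbf{p}_j=\mathbf{v}_j|L^\perp$ lie in $\Gamma$, are pairwise congruent modulo $(n+1)\Gamma$, and the projected coordinate vectors $\mathbf{g}_j=\mathbf{e}_j|L^\perp$ generate $\Gamma$. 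If $F=\mathrm{conv}\{\mathbf{v}_j:j\in J\}$ with $|J|=i+1$, then writing $\mathbf{0}\in P$ in barycentric coordinates, $\sum_{j\in J}\alpha_j\mathbf{p}_j=\mathbf{0}$ with $\alpha_j\ge0$ and $\sum_{j\in J}\alpha_j=1$, and substituting $\mathbf{p}_j=-\mathbf{1}|L^\perp+(n+1)\mathbf{g}_j$ gives the normalization $\mathbf{1}|L^\perp=(n+1)\sum_{j\in J}\alpha_j\mathbf{g}_j$, where $\mathbf{1}=\sum_{l=1}^n\mathbf{e}_l$.

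Next I would search for $\mathbf{g}$ among the generators $\mathbf{g}_m$ and their small integer combinations, since these automatically lie in $\Gamma$. Testing $\mathbf{g}=\mathbf{g}_m$ with $m\in J$: the $i+1$ vectors $\{\mathbf{g}_j\}_{j\in J}$ span the $i$-dimensional $L^\perp$ and hence carry a unique affine dependence $\sum_{j\in J}c_j\mathbf{g}_j=\mathbf{0}$ with $\sum_{j\in J}c_j=1$; solving $(q+1)\mathbf{g}_m=\sum_{j\in J}\gamma_j\mathbf{p}_j$ under $\sum_j\gamma_j=1$ then forces $\gamma_j=\alpha_j+\tfrac{q+1}{n+1}(\delta_{jm}-c_j)$, and $(q+1)\mathbf{g}_m\in P$ holds precisely when all $\gamma_j\ge0$. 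The task becomes a bookkeeping problem: choose $m$ (allowing also indices $j\notin J$, or signed combinations $\pm\mathbf{g}_m$, $\mathbf{g}_a-\mathbf{g}_b$) so that every $\gamma_j$ stays nonnegative. In the \emph{divisible} regime $(i+1)\mid(n+1)$ this already drops out of a length estimate generalizing the one-dimensional case: there $P$ is a segment whose endpoints differ by a nonzero element of $(n+1)\Gamma$, hence span lattice length at least $n+1$, so one endpoint is at least $\lceil(n+1)/2\rceil=q+1$ primitive steps from $\mathbf{0}$ and $(q+1)$ times the primitive generator lands in $P$.

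The step I expect to be the main obstacle is the \emph{non-divisible} regime, i.e.\ when $r:=n-(i+1)q$ satisfies $0\le r<i$. Here neither a single projected vertex nor the centroid of $P$ lands inside $\tfrac{1}{q+1}P$, and the crude choices $\pm\mathbf{g}_m$ need not keep all $\gamma_j\ge0$; one is forced to use the finer structure of $\Gamma$, namely that it is strictly finer than the sublattice generated by the edge vectors $\mathbf{h}_k=\mathbf{g}_{j_k}-\mathbf{g}_{j_0}$ of $\Sigma_0=\mathrm{conv}\{\mathbf{0},\mathbf{h}_1,\dots,\mathbf{h}_i\}$ (the extra generators $\mathbf{g}_j$ with $j\notin J$, together with the relations imposed by $L$ being a lattice plane, fill it in). Concretely I would show that the dilate $\tfrac{n+1}{q+1}\Sigma_0$ --- of which $\tfrac{1}{q+1}P$ is a translate through $\mathbf{0}$ --- is large enough, relative to $\Gamma$, to be forced to contain a nonzero lattice point, the threshold being \emph{exactly} $q+1=\lfloor n/(i+1)\rfloor+1$ via the floor arithmetic $n=(i+1)q+r$. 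Verifying this covering-type estimate while simultaneously guaranteeing that the point is nonzero, and pinning down the sharp floor value, is the technical heart of the argument.
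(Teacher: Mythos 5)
Your reduction (find $\mathbf{g}\in\Gamma\setminus\{\mathbf{0}\}$ with $(q+1)\mathbf{g}\in P$), the vertex arithmetic, and the barycentric bookkeeping are all correct and match the paper's setup, which likewise works with a point $\mathbf{p}\in F\cap L$ and its barycentric coordinates $\tau_j$. Moreover your search space $\mathbf{g}_a-\mathbf{g}_b$ with the condition ``all $\gamma_j\ge0$'' already settles every case in which some coordinate satisfies $\tau_{i_0}\ge\frac{q+1}{n+1}$; by pigeonhole this covers the whole divisible regime (no length estimate is needed there). The genuine gap is exactly where you say it is: the non-divisible regime with all $\tau_j<\frac{q+1}{n+1}$ is left as an unverified ``covering-type estimate,'' and that estimate cannot work as formulated. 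In this regime $\frac{n+1}{q+1}<i+1$, and a translate of $\frac{n+1}{q+1}\Sigma_0$ whose centroid is at the origin contains \emph{no} nonzero point of the lattice $\Lambda_0$ generated by your edge vectors $\mathbf{h}_k$ (integrality forces all coefficients to vanish). Such configurations are realizable: for $n=2$, $i=1$, $F=\mathrm{conv}\{\mathbf{v}_0,\mathbf{v}_1\}$ and $L=\mathrm{span}\{(1,-2)\}$, the segment $P$ is centered at $\mathbf{0}$ and $\frac12P$ contains no nonzero multiple of $\mathbf{h}_1=\mathbf{e}_1|L^\perp$, though it does contain $\mathbf{e}_2|L^\perp$. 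So no largeness argument relative to $\Lambda_0$ can succeed, and ``largeness relative to $\Gamma$'' has no usable invariant behind it either (there is no Minkowski-type input here: a simplex of any volume can miss all nonzero lattice points); the only exploitable structure is arithmetic, namely \emph{which} points of $\Gamma\setminus\Lambda_0$ are forced to lie in $\frac{1}{q+1}P$, and your proposal never identifies them.

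The missing idea, which is how the paper closes this case, is to use integer points of $F$ itself rather than integer combinations of the projected coordinate vectors: every point of $F$ with barycentric coordinates $\frac{n_j}{n+1}$, $n_j\in\mathbb{Z}_{\ge0}$, $\sum_j n_j=n+1$, equals $-\mathbf{1}+\sum_j n_j\mathbf{e}_j\in F\cap\mathbb{Z}^n$. Take $\mathbf{c}=\frac{1}{n+1}\sum_j n_j\mathbf{v}_j$ with all $n_j\in\{q,q+1\}$. If all $\tau_j<\frac{q+1}{n+1}$, then
\begin{equation*}
\mathbf{p}+(q+1)(\mathbf{c}-\mathbf{p})=\sum_j\Bigl[\tfrac{n_j(q+1)}{n+1}-q\,\tau_j\Bigr]\mathbf{v}_j
\end{equation*}
has nonnegative coefficients (since $n_j\ge q$ and $\tau_j<\frac{q+1}{n+1}$) summing to $(q+1)-q=1$, hence lies in $F$; projecting gives $(q+1)\,\mathbf{c}|L^\perp\in P$. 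Finally $\mathbf{c}|L^\perp\ne\mathbf{0}$, because the barycentric coordinates of $\mathbf{0}$ in the $i$-dimensional simplex $P$ are unique, and some $n_j$ equals $q+1$ while $\tau_j<\frac{q+1}{n+1}$ for all $j$. This single explicit lattice point is what replaces your covering step; without it (or an equivalent construction) the proposal does not prove the lemma.
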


%%\begin{lemma}\label{lem:upTnZn}
    %%For any $L\in\mathcal{G}(\mathbb{Z}^n,n-i)$, we have
    %%\begin{align*}
    %%\lambda_1(T_n|L^\perp,\mathbb{Z}^n|L^\perp)\leq\frac{1}{\left\lfloor\frac{n}{i+1}\right\rfloor+1}.    
    %%\end{align*}
%%\end{lemma}
\begin{proof}
As introduced above, $V(T_n)=\{{\bf v}_0,{\bf v}_1,\cdots,{\bf v}_n\}$. Without loss of generality, we assume $F=\mathrm{conv}\{{\bf v}_0,{\bf v}_1,\cdots,{\bf v}_i\}$. As ${\bf 0}\in F|L^\perp$ there exists a $${\bf p}=\tau_0{\bf v}_0+\tau_1{\bf v}_1+\cdots+\tau_i{\bf v}_i\in F\cap L$$ such that $${\bf p}|L^\perp={\bf 0},$$ where $0\le\tau_j\le1$ for $j=0,1,...,i$ and $\tau_0+\tau_1+\ldots+\tau_i=1$. Note that $(F\cap\mathbb{Z}^n)|L^\perp\subseteq (F|L^\perp)\cap(\mathbb{Z}^n|L^\perp)$.

Let $n=(i+1)\cdot m+k$ for $m=\left\lfloor\frac{n}{i+1}\right\rfloor$ and $k\in\{0,1,...,i\}$, then $$n+1=(m+1)+i\cdot m+k.$$ Now we consider the following cases for different values of $k$.
%[fullwidth,itemindent=2em,label=(\arabic*)]  
\begin{enumerate}[fullwidth,itemindent=2em,label={\bf Case \arabic*.}] 
    \item For $k=i$, $n+1=(i+1)\cdot(m+1)$, then there is a positive integer $i_0\in\{0,1,...,i\}$ such that $\tau_{i_0}\ge\frac{m+1}{n+1}$, otherwise $\tau_0+\tau_1+\ldots+\tau_i<1$ and ${\bf p}\notin F$, contradictorily. Then 
$${\bf p}+\frac{m+1}{n+1}({\bf v}_j-{\bf v}_{i_0})\in F$$
holds for any $j\in\{0,1,...,i\}\setminus\{i_0\}$. Since $\frac{1}{n+1}({\bf v}_j-{\bf v}_{i_0})|L^\perp\in\mathbb{Z}^n|L^\perp$ and ${\bf p}|L^\perp={\bf 0}$, we have
$$\lambda_1(F|L^\perp,\mathbb{Z}^n|L^\perp)\le\frac{1}{m+1}.$$
  \item For $0\le k\le i-1$, $n+1=(k+1)\cdot(m+1)+(i-k)\cdot m$. 

 If there exists some $i_0$ such that $\frac{m+1}{n+1}\le\tau_{i_0}\le1$, then for any $j\in\{0,1,...,i\}\setminus\{i_0\}$,
  $${\bf p}+\frac{m+1}{n+1}({\bf v}_j-{\bf v}_{i_0})\in F.$$

  If for any $j\in\{0,1,...,i\}$, $0\le\tau_j<\frac{m+1}{n+1}$, we take 
  $${\bf c}=\frac{n_0{\bf v}_0+n_1{\bf v}_1+\ldots+n_i{\bf v}_i}{n+1}\in F\cap\mathbb{Z}^n$$ such that $n_j=m$ or $m+1$ for $j=0,1,...,i$, then 
  % Aligned equations

\begin{align*}
 {\bf p}+(m+1)({\bf c}-{\bf p})=\underset{j=0}{\overset{i}{\sum}}\left[\frac{n_j}{n+1}\cdot(m+1)-m\cdot\tau_j\right]\cdot{\bf v}_j.
\end{align*}
By simple calculation, we have $$\frac{n_j}{n+1}\cdot(m+1)-m\cdot\tau_j\ge0$$
and
$$\underset{j=0}{\overset{i}{\sum}}\left[\frac{n_j}{n+1}\cdot(m+1)-m\cdot\tau_j\right]=1,$$ then
$$ {\bf p}+(m+1)({\bf c}-{\bf p})\in F.$$

By $({\bf c}-{\bf p})|L^\perp={\bf c}|L^\perp\in\mathbb{Z}^n|L^\perp$, we have
$$\lambda_1(F|L^\perp,\mathbb{Z}^n|L^\perp)\le\frac{1}{m+1}.$$
\end{enumerate}

Thus, this proof is finished.

\end{proof}

%{\rho}_i(F|L^\perp,\mathbb{Z}^n|L^\perp)=

\begin{lemma}\label{lem:projTnZnFn}
    Let $F$ be an $i$-dimensional face of $T_n$, then there exists an $L'\in\mathcal{G}(\mathbb{Z}^n,n-i)$ such that
    \begin{align*}
       T_n|L'^\perp=F|L'^\perp
    \end{align*}
    and
    \begin{align*}
       (T_n|L'^\perp)\cap(\mathbb{Z}^n|L'^\perp)=(F\cap\mathbb{Z}^n)|L'^\perp. 
    \end{align*}
\end{lemma}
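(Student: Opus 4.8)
The plan is to describe the face $F$ combinatorially and then read off an explicit coordinate subspace for $L'$. Write $F=\mathrm{conv}\{{\bf v}_j:j\in S\}$ for an index set $S\subseteq\{0,1,\dots,n\}$ with $|S|=i+1$ (the case $i=n$, i.e. $F=T_n$ and $L'=\{{\bf 0}\}$, being trivial). I single out a coordinate set $S'\subseteq\{1,\dots,n\}$ with $|S'|=i$ as follows: if $0\in S$ I put $S'=S\setminus\{0\}$; if $0\notin S$ I fix any $j^\ast\in S$ and put $S'=S\setminus\{j^\ast\}$. Then I set $L'^\perp=\mathrm{span}\{{\bf e}_m:m\in S'\}$ and $L'=\mathrm{span}\{{\bf e}_m:m\in\{1,\dots,n\}\setminus S'\}$. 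Since $L'$ is spanned by standard unit vectors it is a lattice plane, $L'\in\mathcal{G}(\mathbb{Z}^n,n-i)$, the projection $\cdot\,|L'^\perp$ simply keeps the coordinates indexed by $S'$, and $\mathbb{Z}^n|L'^\perp$ is exactly the set of integer points supported on $S'$.

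For the first identity I would compute the projected vertices. One finds ${\bf v}_0|L'^\perp=-\sum_{m\in S'}{\bf e}_m$ and ${\bf v}_m|L'^\perp=-\sum_{m'\in S'}{\bf e}_{m'}+(n+1){\bf e}_m$ for $m\in S'$, whereas every remaining vertex (that is, each ${\bf v}_j$ with $j\ge 1$, $j\notin S'$, which includes all $j\notin S$ as well as the dropped vertex ${\bf v}_{j^\ast}$ in the second case) projects to the single point $-\sum_{m\in S'}{\bf e}_m$, because the only coordinate in which such a vertex differs from ${\bf v}_0$ is the one discarded by the projection. Hence all vertices outside $F$ collapse onto the vertex $-\sum_{m\in S'}{\bf e}_m$ of $F|L'^\perp$, so $T_n|L'^\perp=\mathrm{conv}\{{\bf v}_j|L'^\perp:0\le j\le n\}=\mathrm{conv}\{{\bf v}_s|L'^\perp:s\in S\}=F|L'^\perp$. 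Explicitly, this projection is the dilated simplex $-\sum_{m\in S'}{\bf e}_m+(n+1)\,\mathrm{conv}\bigl(\{{\bf 0}\}\cup\{{\bf e}_m:m\in S'\}\bigr)$ inside $L'^\perp$, a copy of $T_i$ scaled by $n+1$.

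The substantive part is the second identity, where the inclusion $(F\cap\mathbb{Z}^n)|L'^\perp\subseteq(T_n|L'^\perp)\cap(\mathbb{Z}^n|L'^\perp)$ is immediate from $F\subseteq T_n$. For the reverse inclusion I take an integer point ${\bf z}\in(F|L'^\perp)\cap(\mathbb{Z}^n|L'^\perp)$ and lift it. Writing $w_m=z_m+1$ for $m\in S'$, the description of $F|L'^\perp$ above gives $w_m\ge 0$ and $\sum_{m\in S'}w_m\le n+1$. I then set the barycentric coordinates $\tau_m=w_m/(n+1)$ for $m\in S'$, let the one remaining coordinate ($\tau_0$ if $0\in S$, otherwise $\tau_{j^\ast}$) equal $1-\sum_{m\in S'}\tau_m\ge 0$, and put ${\bf x}=\sum_{s\in S}\tau_s{\bf v}_s\in F$. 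A direct coordinate computation then shows ${\bf x}$ is integral: the coordinates of ${\bf x}$ indexed by $S'$ equal $-1+w_m=z_m$; every coordinate indexed by $\{1,\dots,n\}\setminus S$ equals $-1$; and in the case $0\notin S$ the single remaining coordinate $x_{j^\ast}$ equals $n-\sum_{m\in S'}w_m$, again an integer. Thus ${\bf x}\in F\cap\mathbb{Z}^n$ with ${\bf x}|L'^\perp={\bf z}$, giving the missing inclusion.

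The main obstacle is precisely this lifting step: because $F|L'^\perp$ is a copy of $T_i$ scaled by the large factor $n+1$, it contains many points of $\mathbb{Z}^n|L'^\perp$, and one must verify that each is the image of a genuine lattice point of the small face $F$ rather than merely of $T_n$. What makes this succeed is the exact arithmetic of the construction: the dilation factor is $n+1$, so the choice $\tau_m=w_m/(n+1)$ clears all denominators at once and forces integrality in every coordinate, including those outside $S'$. I would also remark that the two cases ($0\in S$ or not) could be merged a posteriori via the cyclic lattice symmetry of $T_n$ that permutes ${\bf v}_0,\dots,{\bf v}_n$ and carries any face to one containing ${\bf v}_0$; but since the explicit verification is short, I would keep the direct case distinction.
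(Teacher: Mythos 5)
Your proof is correct, and it reaches the stated lemma by a more explicit route than the paper. The underlying idea is the same --- choose $L'$ so that the projection collapses the $n-i$ vertices of $T_n$ outside $F$ onto vertices of $F$, while the scaling factor $n+1$ keeps the lattice bookkeeping exact --- but the paper realizes this idea in greater generality: it partitions $V(T_n)$ into $i+1$ groups $V_0,\dots,V_i$ of \emph{arbitrary} sizes $n_0+\cdots+n_i=n+1$, one group per vertex of $F$, and builds $L'$ from the lattice vectors $({\bf v}_{k,l+1}-{\bf v}_{k1})/(n+1)$ so that each whole group projects to the corresponding vertex. Your coordinate subspace $L'=\mathrm{span}\{{\bf e}_j:j\notin S'\}$ is exactly the degenerate instance of that construction in which one group absorbs all vertices outside $F$ and every other group is a singleton; in exchange you get a cleaner verification (the projection just forgets coordinates, and your lifting step with $w_m=z_m+1$ is a fully explicit version of the paper's lattice-point identification). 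The trade-off appears downstream: Lemma \ref{lem:lowTnZn} takes the plane ``constructed in Lemma \ref{lem:projTnZnFn}'' with a \emph{balanced} partition, $n_j\in\{m,m+1\}$ where $m=\lfloor n/(i+1)\rfloor$, which is what makes the weighted centroid ${\bf c}$ project to the origin and yields $\lambda_1(T_n|L'^\perp,\mathbb{Z}^n|L'^\perp)=1/(m+1)$. Your $L'$ gives only $\lambda_1(T_n|L'^\perp,\mathbb{Z}^n|L'^\perp)=1/(n+1-i)$, which for most $i<n$ is strictly smaller than $1/(m+1)$; so while your argument fully proves the lemma as stated, it could not be substituted for the paper's proof without breaking the lower bound in Theorem \ref{thm:rhoiTnZn}, whose proof needs the stronger, partition-parametrized family of planes that the paper's construction actually provides.
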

%(F|L'^\perp)\cap(\mathbb{Z}^n|L'^\perp)

\begin{proof}
    For convenience, we still take the $i$-dimensional face $F$ of $T_n$ with $V(F)=\{{\bf v}_0,{\bf v}_1,\cdots,{\bf v}_i\}$ as an example.

Firstly, we divide $V(T_n)$ into $i+1$ pairwise disjoint subsets $V_0=\{{\bf v}_{01},\cdots,{\bf v}_{0,n_0}\}$, $V_1=\{{\bf v}_{11},\cdots,{\bf v}_{1,n_1}\}$, $\cdots$, $V_i=\{{\bf v}_{i,1},\cdots,{\bf v}_{i,n_i}\}$, where ${\bf v}_{k1}={\bf v}_k$ for $k=0,1,...,i$ and $n_0+n_1+\cdots+n_i=n+1$.

Next, we assume that $L_j$ is the $(n-1)$-dimensional subspace of $\mathbb{R}^n$ parallel to conv$V_j$ and conv$\underset{k=0,k\not=j}{\overset{i}{\cup}}V_k$. Let ${\bf b}_{kl}=\frac{{\bf v}_{k,l+1}-{\bf v}_{k1}}{n+1}$ and we have ${\bf b}_{kl}\in L_j\cap\mathbb{Z}^n$, where $l=1,2,...,n_k-1$ for $k=0,1,...,i$. For convenience, let $$\Phi=\{{\bf b}_{01},\cdots,{\bf b}_{0n_0-1},{\bf b}_{11},\cdots,{\bf b}_{1n_1-1},\cdots,{\bf b}_{i,1},\cdots,{\bf b}_{i,n_i-1}\}.$$
Moreover, let ${\bf b}_k=\frac{{\bf v}_k-{\bf v}_{k+1}}{n+1}$ for $k=0,1,...,i$ and ${\bf v_{i+1}}={\bf v}_0$, then
$$\widetilde{\Phi}_j=\Phi\cup\{{\bf b}_0,{\bf b}_1,\cdots,{\bf b}_{j-2},{\bf b}_{j+1},\cdots,{\bf b}_i\}\subset L_j\cap\mathbb{Z}^n.$$
It is easy to see that $\widetilde{\Phi}_j\cup\{{\bf b}_j\}$ is a group of linearly independent primary vectors in $\mathbb{Z}^n$, implying that they are also a basis of $\mathbb{Z}^n$, i.e.,
$$\mathbb{Z}^n=L_j\cap\mathbb{Z}^n+\{z_j\cdot{\bf b}_j:z_j\in\mathbb{Z}^n\}.$$
Thus, $L_j\in\mathcal{G}(\mathbb{Z}^n,n-1)$.
%Take some ${\bf v}_{k}^*\in V_k(T_n)$ arbitrarily for $k=0,1,...,i$, and without loss of generality, we assume that ${\bf v}_k^*={\bf v}_{k,n_k}$. ${\bf b}_{0,n_0}=\frac{{\bf v}_{0n_0}-{\bf v}_{1,n_1}}{n+1}$, ${\bf b}_{1,n_1}=\frac{{\bf v}_{1n_1}-{\bf v}_{2n_2}}{n+1}$, $\cdots$, ${\bf b}_{j,n_j}=\frac{{\bf v}_{j,n_j}-{\bf v}_{j+1,n_{j+1}}}{n+1}$, $\cdots$, ${\bf b}_{i-1,n_{i-1}}=\frac{{\bf v}_{i-1,n_{i-1}}-{\bf v}_{in_i}}{n+1}$, ${\bf b}_{i,n_i}=\frac{{\bf v}_{i,n_i}-{\bf v}_{0n_0}}{n+1}$  \{{\bf b}_{0,n_0},{\bf b}_{1,n_1},\cdots,{\bf b}_{j-2,n_{j-2}},{\bf b}_{j+1,n_{j+1}},\cdots,{\bf b}_{i,n_i}\}

%=\frac{{\bf u}_{j,l_j}-{\bf u}_{j+1,l_{j+1}}}{n+1}
Assume that $L'=\underset{j=0}{\overset{i}{\cap}}L_j$, then $L'$ is parallel to each $\mathrm{conv}V_j$ and $\Phi$ is a basis of $L'\cap\mathbb{Z}^n$, which yields $L'\in\mathcal{G}(\mathbb{Z}^n,n-i)$. 

For $j_0=0,1,...,i$, we have known that $\widetilde{\Phi}_{j_0}\cup\{{\bf b}_{j_0}\}$ is a basis of $\mathbb{Z}^n$, then it can be deduced that $$\{{\bf b}_k|L'^\perp:k=0,1,...,j_0-2,j_0,j_0+1,...,i\}$$ is a basis of $\mathbb{Z}^n|L'^\perp$. Since $L'$ is parallel to each $\mathrm{conv}V_j$ and for each $k=0,1,...,i$, $V_k=\{{\bf v}_{k1},\cdots,{\bf v}_{k,n_k}\}$, we have $V_k|L'^\perp={\bf v}_k|L'^\perp$, which implies that
$$T_n|L'^\perp=F|L'^\perp=\mathrm{conv}\{{\bf v}_0|L'^\perp, {\bf v}_1|L'^\perp, ..., {\bf v}_i|L'^\perp\}$$
and
$$(T_n|L'^\perp)\cap(\mathbb{Z}^n|L'^\perp)=(F|L'^\perp)\cap(\mathbb{Z}^n|L'^\perp).$$ Note that ${\bf v}_k={\bf v}_{k1}$. Further, since ${\bf b}_k=\frac{{\bf v}_k-{\bf v}_{k+1}}{n+1}$, we can also obtain
$$(\overline{{\bf v}_k{\bf v}_l}\cap\mathbb{Z}^n)|L'^\perp=(\overline{{\bf v}_k{\bf v}_l}|L'^\perp)\cap(\mathbb{Z}^n|L'^\perp),$$
for $k,l=0,1,...,i$ and $k\not=l$,where $\overline{{\bf v}_k{\bf v}_l}=\text{conv}\{{\bf v}_k,{\bf v}_l\}$, then
$$(F\cap\mathbb{Z}^n)|L'^\perp=(F|L'^\perp)\cap(\mathbb{Z}^n|L'^\perp).$$ 
Therefore,  $(T_n|L'^\perp)\cap(\mathbb{Z}^n|L'^\perp)=(F\cap\mathbb{Z}^n)|L'^\perp$.

%$$\#\{\overline{{\bf u}_k{\bf u}_l}\cap(\mathbb{Z}^n|L'^\perp)\}=n+2$$ for $k,l=0,1,...,i$ and $k\not=l$,where $\overline{{\bf u}_k{\bf u}_l}=\text{conv}\{{\bf u}_k,{\bf u}_l\}$.

%%$$(T_n|L'^\perp)\cap(\mathbb{Z}^n|L'^\perp)=(T_n\cap\mathbb{Z}^n)|L'^\perp= (F\cap\mathbb{Z}^n)|L'^\perp=(F|L'^\perp)\cap(\mathbb{Z}^n|L'^\perp).$$
%then $$(L_j|L'^\perp)\cap(\mathbb{Z}^n|L'^\perp)=(L_j\cap\mathbb{Z}^n)|L'^\perp=\left\{\overset{i}{\underset{k=0,k\not=j-1,j}{\sum}}z_k\cdot{\bf b}_{k,n_k}|L'^\perp:z_k\in\mathbb{Z}\right\}$$ and $$\mathbb{Z}^n|L'^\perp=(L_j|L'^\perp)\cap(\mathbb{Z}^n|L'^\perp)+\{z_j\cdot{\bf b}_{j,n_j}|L'^\perp:z_j\in\mathbb{Z}\}$$ for $j=0,1,...,i$. 

%%Since $\Phi$ is a basis of $L'\cap\mathbb{Z}^n$ and $\widetilde{\Phi}_j\cup\{{\bf b}_j\}$ is a basis of $\mathbb{Z}^n$,
%$$(\text{conv}\underset{k=0,k\not=j}{\overset{i}{\cup}}V_k|L'^\perp)\cap(\mathbb{Z}^n|L'^\perp)=(\text{conv}\underset{k=0,k\not=j}{\overset{i}{\cup}}V_k\cap\mathbb{Z}^n)|L'^\perp$$
%and
\end{proof}

\begin{lemma}\label{lem:lowTnZn}
    Let $F$ be an $i$-dimensional face of $T_n$, then there exists an $L'\in\mathcal{G}(\mathbb{Z}^n,n-i)$ such that
    \begin{align*}
       \lambda_1(F|L'^\perp,\mathbb{Z}^n|L^\perp)=\frac{1}{\left\lfloor\frac{n}{i+1}\right\rfloor+1}.
    \end{align*}
\end{lemma}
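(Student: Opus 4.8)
The plan is to combine the upper bound already furnished by Lemma \ref{lem:upTnZn} with a matching lower bound obtained from a judicious choice of the lattice plane $L'$ in Lemma \ref{lem:projTnZnFn}. Write $m=\lfloor n/(i+1)\rfloor$ and $n=(i+1)m+r$ with $r\in\{0,1,\dots,i\}$, so that $n+1=(i+1)m+(r+1)$, and assume without loss of generality that $F=\mathrm{conv}\{{\bf v}_0,\dots,{\bf v}_i\}$. The freedom I would exploit is that the construction in the proof of Lemma \ref{lem:projTnZnFn} depends on how the $n+1$ vertices of $T_n$ are split into the $i+1$ groups $V_0,\dots,V_i$; I would run that construction with the \emph{balanced} partition in which $r+1$ of the groups have size $m+1$ and the remaining $i-r$ groups have size $m$ (possible since $(r+1)(m+1)+(i-r)m=n+1$). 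This yields an $L'\in\mathcal{G}(\mathbb{Z}^n,n-i)$ for which $S:=F|L'^\perp=T_n|L'^\perp$ is an $i$-dimensional simplex with ${\bf 0}$ in its relative interior, $\Gamma:=\mathbb{Z}^n|L'^\perp$ is a lattice, and $\Gamma\cap S=(F\cap\mathbb{Z}^n)|L'^\perp$.

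First I would translate $\lambda_1(S,\Gamma)$ into barycentric language. Since ${\bf 0}$ is the centroid of $T_n$ and all vertices in the $j$-th group project to ${\bf v}_j|L'^\perp$, the origin has barycentric coordinates $\omega_j=n_j/(n+1)$ with respect to the vertices ${\bf v}_0|L'^\perp,\dots,{\bf v}_i|L'^\perp$ of $S$, where $n_j=|V_j|$. Every lattice point of $F$ has the form $\tfrac1{n+1}\sum_{j=0}^i a_j{\bf v}_j$ with integers $a_j\ge0$ and $\sum_j a_j=n+1$, so its projection has barycentric coordinates $a_j/(n+1)$. A short computation with the Minkowski functional of $S$ centred at ${\bf 0}$ shows that such a projected point $P$ satisfies
$$
\mu_S(P)=1-\min_{0\le j\le i}\frac{a_j}{n_j}.
$$
Because $\lambda_1(S,\Gamma)\ge\tfrac1{m+1}$ is equivalent to $\mathrm{int}(\tfrac1{m+1}S)\cap\Gamma\subseteq\{{\bf 0}\}$, and any lattice point outside $\mathrm{int}(S)$ has $\mu_S\ge1>\tfrac1{m+1}$, it suffices to check that every projected $F$-lattice point $P\ne{\bf 0}$ satisfies $\mu_S(P)\ge\tfrac1{m+1}$, i.e. $\min_j a_j/n_j\le m/(m+1)$.

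The heart of the argument is then a purely combinatorial verification for the balanced partition. If $n_j=m+1$ then $a_j>\tfrac{m}{m+1}n_j=m$ forces $a_j\ge m+1=n_j$; if $n_j=m$ then $a_j>\tfrac{m}{m+1}n_j=m-\tfrac{m}{m+1}$ again forces $a_j\ge m=n_j$. Hence if $\min_j a_j/n_j>m/(m+1)$, then $a_j\ge n_j$ for all $j$, and summing gives $\sum_j a_j\ge\sum_j n_j=n+1$, with equality only when $a_j=n_j$ for all $j$; but that is precisely the barycentric description of the centroid ${\bf 0}$. So no nonzero projected $F$-point lies strictly inside $\tfrac1{m+1}S$, which gives $\lambda_1(S,\Gamma)\ge\tfrac1{m+1}$. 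Combined with $\lambda_1(F|L'^\perp,\mathbb{Z}^n|L'^\perp)\le\tfrac1{m+1}$ from Lemma \ref{lem:upTnZn} (whose hypotheses $\dim(F|L'^\perp)=i$ and ${\bf 0}\in F|L'^\perp$ hold for our $L'$), this yields the claimed equality.

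The hard part will be the bookkeeping that isolates the relevant lattice points: one must justify that only the projections of $F\cap\mathbb{Z}^n$ can lie strictly inside $\tfrac1{m+1}S$ (handled by the bound $\mu_S\ge1$ outside $S$ together with $\Gamma\cap S=(F\cap\mathbb{Z}^n)|L'^\perp$ from Lemma \ref{lem:projTnZnFn}), and then to choose the partition so that the threshold $\tfrac{m}{m+1}n_j$ lies strictly between $n_j-1$ and $n_j$ for every admissible group size, which is exactly what the balanced choice $n_j\in\{m,m+1\}$ guarantees. Everything else is affine bookkeeping in barycentric coordinates.
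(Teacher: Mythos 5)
Your proposal is correct, and while it uses the paper's scaffolding --- the same $L'$ from Lemma \ref{lem:projTnZnFn} built from the balanced partition with group sizes $n_j\in\{m,m+1\}$, the observation that ${\bf c}=\frac{1}{n+1}\sum_j n_j{\bf v}_j$ is a lattice point of $F$ projecting to the origin, and the identification $\Gamma\cap S=(F\cap\mathbb{Z}^n)|L'^\perp$ --- the decisive lower-bound step is carried out by a genuinely different argument. The paper's proof exhibits the boundary points ${\bf c}\pm\frac{m+1}{n+1}({\bf v}_{i_0}-{\bf v}_{i_1})$ and ${\bf c}\pm\frac{m}{n+1}({\bf v}_{i_0}-{\bf v}_{i_1})$ in $\partial F$, and then, to exclude all other lattice points, asserts for the set $Q$ of lattice points ${\bf c}\pm({\bf e}_{i_0}-{\bf e}_{i_1})$ that $\mathrm{relint}[\mathrm{conv}(Q)]\cap\mathbb{Z}^n=\emptyset$; this claim is left unproven, and as literally written it even fails, since ${\bf c}$ itself lies in $\mathrm{relint}[\mathrm{conv}(Q)]\cap\mathbb{Z}^n$ (what is surely meant is a statement about nonzero projected points, which still requires an argument). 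Your route replaces this with the gauge formula $\mu_S(P)=1-\min_j a_j/n_j$ in barycentric coordinates plus the pigeonhole step (for $n_j\in\{m,m+1\}$, the inequality $a_j/n_j>m/(m+1)$ forces the integer $a_j\ge n_j$, and $\sum_j a_j=\sum_j n_j=n+1$ then forces $a_j=n_j$), which controls \emph{all} projections of $F\cap\mathbb{Z}^n$ uniformly rather than only the special directions ${\bf e}_{i_0}-{\bf e}_{i_1}$; in this sense your argument is more complete and rigorous than the published one, at the price of being less geometrically explicit about where the extremal lattice vectors sit. Your remaining choices --- invoking Lemma \ref{lem:upTnZn} for the matching upper bound (its hypotheses hold since $\dim(F|L'^\perp)=i$ and ${\bf 0}={\bf c}|L'^\perp\in F|L'^\perp$), and using Lemma \ref{lem:projTnZnFn} to reduce to projections of $F\cap\mathbb{Z}^n$ --- are legitimate and mirror how the paper assembles Theorem \ref{thm:rhoiTnZn}.
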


\begin{proof}
   Take the $(n-i)$-dimensional lattice plane $L'\in\mathcal{G}(\mathbb{Z}^n,n-i)$ constructed in Lemma \ref{lem:projTnZnFn}, and we show that it indeed satisfies the equivalent above.

   Let ${\bf c}=\frac{n_0{\bf v}_0+n_1{\bf v}_1+\ldots+n_i{\bf v}_i}{n+1}\in F$, where $n_j$ is the number of vertices in $V_j$ for $j=0,1,...,i$, as set in the proof of Lemma \ref{lem:projTnZnFn}. Since ${\bf b}_{jl}=\frac{{\bf v}_{j,l+1}-{\bf v}_{j1}}{n+1}\in L'$, where $l=1,2,...,n_j-1$ for $j=0,1,...,i$, we have $${\bf v}_{j1}={\bf v}_{j,l+1}-(n+1){\bf b}_{jl},$$ then $$n_j{\bf v}_{j1}=\underset{l=1}{\overset{n_j}{\sum}}{\bf v}_{j,l}-\underset{l=1}{\overset{n_j-1}{\sum}}(n+1){\bf b}_{jl}.$$
Note ${\bf v}_{j1}={\bf v}_j$ for $j=0,1,2,...,i$. It follows by $T_n\in\mathcal{K}_c^n$ that
\begin{align*}
    \underset{j=0}{\overset{i}{\sum}}\underset{l=1}{\overset{n_j}{\sum}}{\bf v}_{j,l}={\bf 0}
\end{align*}
and
% Aligned equations
\begin{align*}
 {\bf c} & =\frac{\underset{j=0}{\overset{i}{\sum}}\underset{l=1}{\overset{n_j}{\sum}}{\bf v}_{j,l}-\underset{j=0}{\overset{i}{\sum}}\underset{l=1}{\overset{n_j-1}{\sum}}(n+1){\bf b}_{jl}}{n+1} \\
         &=-\underset{j=0}{\overset{i}{\sum}}\underset{l=1}{\overset{n_j-1}{\sum}}{\bf b}_{jl}\in L'\cap\mathbb{Z}^n,
\end{align*}
then
$${\bf c}|L'^\perp={\bf 0}.$$

Similarly, assume $n+1=(k+1)\cdot(m+1)+(i-k)\cdot m$ for $m=\lfloor\frac{n}{i+1}\rfloor$ and $k\in\{0,1,...,i\}$. In particular, let $n_0=n_1=...=n_k=m+1$ and $n_{k+1}=n_{k+2}=...=n_i=m$, then $${\bf c}=\frac{\overset{k}{\underset{l=0}{\sum}}(m+1){\bf v}_l+\overset{i}{\underset{l=k+1}{\sum}}m{\bf v}_l}{n+1}\in F\cap\mathbb{Z}^n.$$

Take an arbitrary pairwise $i_0,i_1\in\{0,1,...,i\}$. If $\{i_0,i_1\}\subset\{0,1,...,k\}$,
$${\bf c}\pm\frac{m+1}{n+1}({\bf v}_{i_0}-{\bf v}_{i_1})\in\partial(F);$$
if  $\{i_0,i_1\}\subset\{k+1,...,i\}$,
$${\bf c}\pm\frac{m}{n+1}({\bf v}_{i_0}-{\bf v}_{i_1})\in\partial(F);$$
if $i_0\in\{0,1,...,k\}$ and $i_1\in\{k+1,...,i\}$,
$${\bf c}+\frac{m}{n+1}({\bf v}_{i_0}-{\bf v}_{i_1})\in\partial(F)$$
and
$${\bf c}-\frac{m+1}{n+1}({\bf v}_{i_0}-{\bf v}_{i_1})\in\partial(F).$$
In addition, let $Q=\{{\bf c}':{\bf c}'={\bf c}\pm\frac1{n+1}({\bf v}_{i_0}-{\bf v}_{i_1})\text{ for }i_0,i_1\in\{0,1,...,i\},\text{and }i_0\not=i_1\}$. Apparently, $\text{relint}[\text{conv}(Q)]\cap\mathbb{Z}^n=\emptyset$.

Thus, one can get 
$$\lambda_1(F_i|L'^\perp,\mathbb{Z}^n|L'^\perp)=\frac1{m+1}=\frac1{\lfloor\frac{n}{i+1}\rfloor+1}.$$
\end{proof}

\begin{proof}[{\it Proof of Theorem \ref{thm:rhoiTnZn}}]
Let $\mathcal{F}=\{F':F'\text{ is an }i\mathrm{-}\text{dimensional face of }T_n\}$. Given an arbitrary $L\in\mathcal{G}(\mathbb{Z}^n,n-i)$, then $T_n|L^\perp$ is an $i$-dimensional polytope with at least $i+1$ and at most $n+1$ vertices by $V(T_n|L^\perp)=V(T_n)|L^\perp$. As is known,
$$T_n|L^\perp=\underset{F'\in\mathcal{F}}{\bigcup}F'|L^\perp,$$
then there always exists an $i$-dimensional face $F\in\mathcal{F}$ such that ${\bf 0}\in F|L^\perp$ and $\dim(F|L^\perp)=i$.

Firstly, by $F|L^\perp\subseteq T_n|L^\perp$ and Lemma \ref{lem:upTnZn} we have
$$(F|L^\perp)\cap(\mathbb{Z}^n|L^\perp)\subseteq(T_n|L^\perp)\cap(\mathbb{Z}^n|L^\perp)$$
and
$$\lambda_1(T_n|L^\perp,\mathbb{Z}^n|L^\perp)\le\lambda_1(F|L^\perp,\mathbb{Z}^n|L^\perp)\le\frac{1}{\left\lfloor\frac{n}{i+1}\right\rfloor+1}$$ holds for any $L\in\mathcal{G}(\mathbb{Z}^n,n-i)$, where $i=1,2,...,n$. 
Then, it follows by Lemma \ref{prop:projdef} that
$$\rho_i(T_n,\mathbb{Z}^n)\le\frac{1}{\left\lfloor\frac{n}{i+1}\right\rfloor+1}.$$ 
%where $\rho^*_i(\cdot)$ is defined by the equality (\ref{rho*}) in Section \ref{section of vi1}.

For a lower bound of $\rho_i(T_n,\mathbb{Z}^n)$, by Lemmas \ref{lem:projTnZnFn}-\ref{lem:lowTnZn}, it can be definitely attained by $L'\in\mathcal{G}(\mathbb{Z}^n,n-i)$ constructed in Lemma \ref{lem:projTnZnFn} such that 
$$\lambda_1(T_n|L'^\perp,\mathbb{Z}^n|L'^\perp)=\frac{1}{\left\lfloor\frac{n}{i+1}\right\rfloor+1},$$
yielding
$$\rho_i(T_n,\mathbb{Z}^n)\ge\frac1{\lfloor\frac{n}{i+1}\rfloor+1}.$$

%%Further, let ${\bf u}_j:=V_j|L'^\perp={\bf v}_j|L'^\perp$ for $j=0,1,...,i$, and we have
%%$$\#\{\overline{{\bf u}_k{\bf u}_l}\cap(\mathbb{Z}^n|L'^\perp)\}=n+2$$ for $k,l=0,1,...,i$ and $k\not=l$,where $\overline{{\bf u}_k{\bf u}_l}=\text{conv}\{{\bf u}_k,{\bf u}_l\}$.

Therefore,
$$\rho_i(T_n,\mathbb{Z}^n)=\frac1{\lfloor\frac{n}{i+1}\rfloor+1}.$$

The proof is completely finished. 
\end{proof}

%    Text of article.
%%For $K\in\mathcal{K}_c^n$, we have
% Aligned equations
%%\begin{align*}
%% \overline{\rho}_i & =\inf\{\rho>0:(\rho K+L)\cap\Lambda\neq %%L\cap\Lambda, \text{for all }L\in\mathcal{G}(\mathbb{R}^n,n-i)\} \\
 %% & = \max\{\lambda_1(K|L^\perp,\Lambda|L^\perp)\text{ for all }L\in\mathcal{G}(\mathbb{R}^n,n-i)\}.
%%\end{align*}

%    Bibliographies can be prepared with BibTeX using amsplain,
%    amsalpha, or (for "historical" overviews) natbib style.
\bibliographystyle{amsplain}
%%\bibliography{amsplain}
\bibliography{refs}%^_^
%    Insert the bibliography data here.

\end{document}